\documentclass[12pt,preprint,authoryear]{elsarticle}

\usepackage[a4paper,margin=1in]{geometry}
\usepackage{amssymb}
\usepackage{amsmath}
\usepackage{tkz-fct}
\usepackage{tikz}
\usepackage{bm}
\usepackage{pgfplots}
\usepackage{algorithm}
\usepackage{algorithmicx}
\usepackage{tabularx}
\pgfplotsset{compat=1.18}
\usepackage{comment}
\usepackage{xcolor}
\usepackage{pifont}
\usepackage{algpseudocode}
\usepackage{longtable}
\usepackage[short,c2]{optidef}
\usepackage{graphicx}
\usepackage{subcaption}
\usepackage{mathtools}
\usetikzlibrary{calc}
\usepackage{multicol}
\PassOptionsToPackage{dvipsnames,svgnames}{xcolor}
\usepackage{textcomp}
\usepackage{amsthm}

\newtheorem{proposition}{Proposition}

\usepackage{setspace}
\usepackage{multirow}

\usepackage{mathtools}
\newlength{\constrRHSwidth}
\usepackage{anyfontsize}
\usepackage{booktabs}
\usepackage[table]{xcolor}
\usepackage{tabularx}

\definecolor{fixedzero}{HTML}{F4CCCC}
\definecolor{fixedone}{HTML}{D9EAD3}
\definecolor{partial}{HTML}{E7E6E6}

\biboptions{authoryear,round,sort}
\journal{European Journal of Operational Research}

\begin{document}

%\pgfplotsset{compat=1.18} 

\begin{frontmatter}

%% Title, authors and addresses

%% use the tnoteref command within \title for footnotes;
%% use the tnotetext command for theassociated footnote;
%% use the fnref command within \author or \affiliation for footnotes;
%% use the fntext command for theassociated footnote;
%% use the corref command within \author for corresponding author footnotes;
%% use the cortext command for theassociated footnote;
%% use the ead command for the email address,
%% and the form \ead[url] for the home page:
%% \title{Title\tnoteref{label1}}
%% \tnotetext[label1]{}
%% \author{Name\corref{cor1}\fnref{label2}}
%% \ead{email address}
%% \ead[url]{home page}
%% \fntext[label2]{}
%% \cortext[cor1]{}
%% \affiliation{organization={},
%%             addressline={},
%%             city={},
%%             postcode={},
%%             state={},
%%             country={}}
%% \fntext[label3]{}

\title{Largest Rashomon sets of decision trees for robust contextual optimization}

%% use optional labels to link authors explicitly to addresses:
%% \author[label1,label2]{}
%% \affiliation[label1]{organization={},
%%             addressline={},
%%             city={},
%%             postcode={},
%%             state={},
%%             country={}}
%%
%% \affiliation[label2]{organization={},
%%             addressline={},
%%             city={},
%%             postcode={},
%%             state={},
%%             country={}}

\author[dtumanagement]{Lorenzo Bonasera\corref{cor1}}
\ead{lobon@dtu.dk}

\author[dtumanagement]{David Pisinger}
\ead{dapi@dtu.dk} % replace/check if needed

\cortext[cor1]{Corresponding author}

\affiliation[dtumanagement]{
  organization={DTU Management, Technical University of Denmark},
  addressline={Akademivej, Building 358},
  city={Kongens Lyngby},
  postcode={2800},
  country={Denmark}
}

%% Abstract
\begin{abstract}
Many decision trees fit the same data almost equally well, yet they can route a query point to
different leaves and induce different local empirical distributions. We study decisions that meet
prescribed cost, shortage or risk targets despite this predictive multiplicity. We propose
the joint Rashomon and robustness optimization framework for optimal decision trees. It jointly
selects an operational decision and the largest Rashomon set of trees, so that the targets hold
under the local empirical distribution that every tree in this set induces at the query point.
We specialize the framework to the regression setting, and we show that a tree affects the
decision only through the training observations sharing the query leaf, which we call its query
neighborhood. As a result, the robust problem involves only finitely many distinct constraints,
which can be examined in order of increasing estimation loss. We develop a constraint generation
algorithm that combines query-path pricing with dynamic programming to identify violating
neighborhoods without enumerating trees. On synthetic newsvendor instances, the algorithm
typically needs few neighborhoods and runs substantially faster than full neighborhood
enumeration. On restaurant demand data, the robust orders increase the mean tolerated excess
estimation loss by 17.6\% and reduce the empirical conditional value-at-risk of the worst 10\% of
realized costs by 8.3\% relative to the sample average approximation orders of the optimal tree, while the mean
cost difference is not statistically significant. An interpretability analysis further shows how
the retained neighborhoods explain the decision and its robustness limit.
\end{abstract}

%%Graphical abstract
%\begin{graphicalabstract}
%\includegraphics{grabs}
%\end{graphicalabstract}

%%Research highlights
%\begin{highlights}
%\item Research highlight 1
%\item Research highlight 2
%\end{highlights}

%% Keywords
\begin{keyword}
%% keywords here, in the form: keyword \sep keyword

%% PACS codes here, in the form: \PACS code \sep code

%% MSC codes here, in the form: \MSC code \sep code
%% or \MSC[2008] code \sep code (2000 is the default)
Machine learning in OR \sep
Rashomon set \sep
Robust optimization \sep
Contextual optimization \sep
Interpretability
\end{keyword}

\end{frontmatter}

%% Add \usepackage{lineno} before \begin{document} and uncomment 
%% following line to enable line numbers
%% \linenumbers

%% main text
%%

%% Use \section commands to start a section
\section{Introduction}\label{sec:intro}

During the past decade, data-driven prescriptive analytics has attracted considerable attention in operational research and management science. Many decision problems in these fields involve optimization under uncertainty, where conditional distributions of uncertain parameters can be estimated from historical observations and contextual covariates~\citep{bertsimas2020predictive, ban2019big}. This setting, often referred to as contextual stochastic optimization, creates a natural connection between machine learning and operational research by combining predictive models with prescriptive decision making, and it has motivated several methodological frameworks, including the widely adopted \textit{predict-then-optimize} (or \textit{estimate-then-optimize}) paradigm~\citep{sadana2025survey}. A fundamental issue for these frameworks is how well the predictive model generalizes beyond the training sample. Although bounds on estimation error can be derived under suitable assumptions, predictive performance may still be affected by model overfitting or distributional shifts in the data~\citep{quinonero2008dataset}. In contextual stochastic optimization, estimation error may propagate to the downstream decision problem and substantially affect solution quality, since the mapping from predictive to prescriptive solutions is often difficult to bound or characterize. This has motivated regret-based approaches that train predictive models for decision quality~\citep{elmachtoub2022smart}, and, in parallel, the use of robust and distributionally robust optimization to protect decisions against estimation error~\citep{kuhn2025distributionally}.

Within this literature, a central question is how much estimation uncertainty a decision can tolerate while meeting prescribed performance targets. \citet{zhu2022joint} propose joint estimation and robustness optimization (JERO), in which uncertainty is defined through sublevel sets of the estimation loss. The framework jointly selects a downstream decision and the largest loss tolerance for which all prescribed targets remain satisfied over the resulting uncertainty set. This principle is also relevant for tree-based models, which have demonstrated their usefulness in data-driven optimization~\citep{bertsimas2019optimal,elmachtoub2020decision}, with interpretability providing an important motivation for their use~\citep{rudin2019stop}. In contextual stochastic optimization, a decision tree can be used to construct a local empirical conditional distribution from the training observations assigned to the terminal leaf reached by the query point~\citep{bertsimas2020predictive}. Estimation uncertainty therefore concerns the choice of a tree partition and the local empirical distribution it induces. Applying JERO's paradigm in this setting requires accounting for the combinatorial structure of tree partitions and enforcing the prescribed targets over the distributions induced by near-optimal trees.
This view of estimation uncertainty is closely related to the \textit{Rashomon effect}~\citep{breiman2001statistical, xin2022exploring, paes2023inevitability} and to predictive multiplicity~\citep{black2022model}, whereby models with similar estimation losses yield conflicting predictions for individual observations. For trees, alternative near-optimal partitions may route the same query point to different terminal leaves and therefore induce different local empirical distributions. In light of the above, we extend the JERO framework to optimal decision trees (ODT) and refer to the resulting model as ODT-JRRO, where JRRO stands for \textit{joint Rashomon and robustness optimization}. The model jointly selects a downstream decision and an estimation-loss tolerance, seeking the largest Rashomon set for which the decision satisfies all prescribed targets under every induced distribution. We specialize our formulation to the regression setting. 

Our contributions are threefold. First, we formulate target-oriented
robustness over the empirical conditional distributions induced by
near-optimal regression trees, jointly selecting an operational decision
and its tolerance to excess estimation loss.
Second, we show that a tree affects the decision only through the training observations sharing the
query leaf, which we call its query neighborhood. We thereby reduce the
robust constraints to a finite family of query neighborhoods and
characterize the optimal tolerance through the minimum estimation loss at
which each neighborhood arises, distinguishing whether this tolerance is
attained. Third, we develop a constraint generation algorithm whose
separation procedure combines a decomposition along the path of the query
point with dynamic programming to identify violating neighborhoods without
enumerating the Rashomon set. Computational experiments on the contextual
newsvendor problem demonstrate the advantage of selective constraint
generation over full enumeration and identify nominal tree optimization as
a limitation at larger depths. A benchmark on real restaurant demand data shows greater robustness to alternative trees and lower conditional value-at-risk than the sample average approximation (SAA) on the nominal optimal tree. An interpretability case study
illustrates how the retained neighborhoods explain the change in the
decision and the point at which further robustness becomes infeasible.

\paragraph{Outline}
The remainder of this paper is organized as follows. Section~\ref{sec:related} reviews related work on contextual optimization, optimal decision trees, predictive multiplicity, and data-driven robust optimization. Section~\ref{sec:background} introduces the general problem setting and its specialization to regression trees. In Section~\ref{sec:methodology}, we reduce the robust constraints to a finite family indexed by query neighborhoods and characterize radius feasibility through their minimum-loss entry levels. Section~\ref{sec:dp} develops the query-path decomposition and presents the dynamic programming and constraint generation algorithms. Section~\ref{sec:computational} evaluates the proposed method using synthetic instances and a real-data benchmark for the contextual newsvendor problem. Finally, Section~\ref{sec:conclusion} summarizes our findings and outlines directions for future research.

\section{Related work}\label{sec:related}
Our work lies at the intersection of contextual prescriptive analytics, optimal decision trees, predictive multiplicity, and data-driven robust optimization. We review the relevant contributions and identify the modeling and computational differences that motivate our approach.

\subsection{Contextual optimization}

Contextual optimization combines learning from covariates with decision making under uncertainty. The interaction between prediction quality and downstream performance has motivated both decision-focused learning and data-driven approximations of conditional stochastic optimization problems. We refer to \citet{mandi2024decision} and \citet{sadana2025survey} for surveys of these approaches. One important stream of research modifies the learning problem by using loss functions that account for the downstream optimization problem, including the smart predict-then-optimize (SPO) approach of \citet{elmachtoub2022smart} and related end-to-end and decision-focused methods~\citep{donti2017task,wilder2019melding,el2023generalization,liu2021risk}.
A second stream, closer to our setting, incorporates covariate information into stochastic optimization to obtain data-driven prescriptions. \citet{deng2022predictive} develop predictive stochastic programming, integrating covariate-based learning with stochastic programming and studying model fidelity, statistical optimality, and generalization. \citet{bertsimas2020predictive} propose a weighted sample average approximation framework in which covariates determine the weights assigned to historical observations, providing local empirical approximations of the conditional distribution. Their framework includes weights induced by decision trees and random forests. \citet{kannan2025data} study sample average approximation frameworks that combine predictive models with residual-based scenario generation, including a construction using leave-one-out residuals, and establish consistency and finite-sample guarantees under suitable assumptions.
Tree-based methods construct contextual partitions that can be used to define local prescriptions. \citet{kallus2023stochastic} develop stochastic optimization forests, where tree splits are chosen to improve downstream decision quality. SPO Trees~\citep{elmachtoub2020decision} use the SPO loss to learn interpretable partitions, while \citet{bertsimas2019optimal} develop optimal prescriptive trees for personalized decision making. \citet{ribeiro2026integrated} integrate distribution estimation and prescription
through decision trees for contextual two-stage linear stochastic programs. Further applications of data-driven prescriptions address specific operational settings~\citep{ban2019big,ferreira2016analytics,liu2021time}. Our model uses the local weighting construction in which a regression tree assigns equal weight to observations sharing the query leaf. We consider the distributions induced by all tree partitions satisfying a common estimation-loss bound and seek a decision that meets the targets under every such distribution.

\subsection{Optimal decision trees and Rashomon sets}

Classical tree-induction algorithms construct trees through top-down greedy procedures, selecting at each branching node a split that optimizes a local criterion~\citep{breiman1984classification,c4.5}. 
Although computationally efficient, these procedures do not, in general, minimize a global tree-level objective. This limitation has motivated optimal decision tree methods, which jointly optimize the tree structure and its leaf predictions or labels.
The resulting learning problem is computationally challenging, with classical NP-hardness results for optimal binary decision trees~\citep{hyafil1976constructing}.
\citet{bertsimas2017optimal} propose a mixed-integer optimization framework for learning optimal classification trees of bounded depth.
In the regression setting, optimization-based formulations jointly determine the branching structure, the assignment of observations to leaves, and the numerical prediction at each leaf, often using a squared-error loss and a complexity penalty~\citep{dunn2018}.
Alternative formulations and computational approaches have been developed for optimal regression trees~\citep{blanquero2022sparse,zhang2023optimal,van2024piecewise} and optimal classification trees~\citep{verwer2017learning,blanquero2021optimal,categoricalodt,sparseodt,bonasera2024optimal,boutilier2023optimal,boutilier2022shattering,aghaei2025strong}.
Further extensions address specialized predictive and prescriptive settings~\citep{bertsimas2022survival,amram2022policy}.
We refer to \citet{carrizosa2021mathematical} for a survey of mathematical optimization approaches to tree induction.
A complementary line of work computes optimal trees by dynamic programming~\citep{aglin2020learning,lin2020generalized,demirovic2022murtree,van2023necessary}. For suitable losses and structural constraints, the subtrees below a split act on disjoint subsets of the data and can be optimized independently. In the regression setting, \citet{van2024piecewise} develop dynamic programming methods for piecewise constant and linear regression trees, including specialized algorithms for trees of depth two. Our method uses this decomposition to compute minimum-loss completions of query paths. For a fixed query path, the sibling subtrees are optimized independently, while the query leaf determines the local empirical distribution.

Even when an optimal predictive model is unique, many alternatives may attain losses close to the optimum. Following \citet{breiman2001statistical}, sets of such near-optimal models have been studied to assess variable importance, the existence of simpler models, and the inevitability of the effect itself~\citep{fisher2019all,semenova2022existence,paes2023inevitability}. The resulting disagreement on individual predictions, known as predictive multiplicity, has been measured and analyzed in classification~\citep{marx2020predictive,black2022model,hsu2022rashomon}, with implications for arbitrariness and trustworthiness in decision systems~\citep{creel2022algorithmic,hsu2024rashomongb}. For sparse classification trees, \citet{xin2022exploring} use dynamic programming to enumerate Rashomon sets. More recently, \citet{arslan2025sorted} develop SORTD, which enumerates trees in nondecreasing objective order and supports separable, totally ordered objectives, including regression loss. Our computational representation consists of the query neighborhoods induced by regression trees, each associated with the minimum estimation loss needed to realize it. Trees inducing the same query neighborhood generate identical downstream constraints. The separation procedure therefore searches for neighborhoods with minimum compatible tree loss on which the current decision violates a target, using optimal subtree computations to price their query paths. In contrast with enumeration approaches, this representation supports radius optimization and downstream certification without listing the trees in the Rashomon set.

\subsection{Data-driven robust optimization}
 
Robust optimization seeks solutions that remain feasible for every parameter realization in an uncertainty set \citep{soyster1973convex, ben1998robust}. The size of this set determines how conservative the solution is, and it is typically fixed in advance, for instance through the budget of uncertainty of \citet{bertsimas2004price}. Distributionally robust optimization applies the same worst-case principle to an ambiguity set of probability distributions \citep{wiesemann2014distributionally}. We refer to \citet{bertsimas2011theory}, \citet{rahimian2022frameworks}, and \citet{kuhn2025distributionally} for surveys. Data-driven uncertainty sets can be constructed using statistical information or learned representations~\citep{bertsimas2018data,goerigk2023data}, while distributional ambiguity sets include moment, $\phi$-divergence, Wasserstein, and relative-entropy formulations~\citep{delage2010distributionally,ben2013robust,mohajerin2018data,van2021data}.
More closely related to contextual decision making, \citet{chenreddy2022data} construct conditional uncertainty sets that vary with the observed covariates. \citet{kannan2024residuals} develop residuals-based distributionally robust formulations that combine predictive models with covariate information. In our setting, ambiguity arises from the local empirical distributions induced by all regression trees satisfying a common estimation-loss bound. Membership in the ambiguity family is therefore determined by the estimation loss of an inducing tree partition. For a fixed query point and loss tolerance, this family contains finitely many distinct empirical distributions. Since expectation is linear in the probability distribution, enforcing the expected-value targets over this family is equivalent to enforcing them over its convex hull. Thus, the fixed-radius problem also admits a distributionally robust interpretation, without changing the robust constraints or requiring mixtures to be induced by individual trees.

A related line of work evaluates decisions relative to prescribed performance targets. \citet{brown2009satisficing} introduce satisficing measures that evaluate risky positions through their ability to attain desired goals. \citet{long2023robust} develop robust satisficing, which minimizes a fragility measure describing how performance relative to a target deteriorates as the distribution deviates from an empirical reference. The JERO framework of \citet{zhu2022joint}, introduced in Section \ref{sec:intro}, is the direct methodological predecessor of our work. The authors derive explicit formulations for ordinary least squares, LASSO, and maximum likelihood estimation, as well as for uncertainty in probability distributions, including a finite-support model with estimated scenario probabilities. Under suitable convexity assumptions, robust counterparts are obtained using support functions and conjugate duality, and the radius is optimized by binary search over fixed-radius problems. They also develop second-order conic approximations for formulations involving exponential cones.
We retain JERO's loss-sublevel construction and joint optimization of the radius and decision. Our contribution concerns the combinatorial family of regression-tree partitions and the local empirical conditional distributions they induce at a given query point. We reduce the robust constraints to query neighborhoods, characterize their minimum-loss entry levels, and develop a constraint generation algorithm that searches over query paths using optimal subtree computations. The result is a solution method for the tree-based extension of JERO.

\section{Problem formulation}\label{sec:background}

We introduce a joint estimation and robustness optimization model in which the uncertainty set is defined as the Rashomon set of regression trees. The construction follows the JERO principle of defining uncertainty through sublevel sets of an estimation loss and selecting an operational decision that remains feasible for the largest admissible set. In contrast with classical robust optimization, the size of the uncertainty set is optimized jointly with the operational decision.

\subsection{Contextual decision problem}
We consider a contextual decision problem in which, after observing a covariate vector $x \in\mathcal X\subseteq\mathbb R^{d_x}$, the decision maker chooses a decision $z\in\mathcal Z\subseteq\mathbb R^{d_z}$ to minimize a cost function $c(z,y)$ that depends on an uncertain quantity $y\in\mathcal Y\subseteq\mathbb R^{d_y}$. The feasible set $\mathcal Z$ is assumed to be nonempty and compact. Whenever convexity of the downstream feasible sets or restricted master problems is invoked, we additionally assume that $\mathcal Z$ is convex. For simplicity, we restrict our attention to the scalar case $d_y=1$. The contextual stochastic optimization problem~\citep{sadana2025survey} associated with a generic covariate realization $x$ is given by
\begin{equation}\label{eq:cso}
(\text{CSO}) \quad
z^\star(x) \in
\arg \min_{z \in \mathcal Z}
\mathbb E_{\mathbb P_{Y\mid X=x}} \bigl[c(z,Y)\bigr],
\end{equation}
where $\mathbb P_{Y\mid X=x}$ denotes the unknown conditional distribution of $Y$ given $X=x$. We assume that the expected cost is well defined and
finite for every $z\in\mathcal Z$.
In a data-driven setting, the conditional distribution $\mathbb P_{Y\mid X=x}$ is estimated from historical data. Let \mbox{$\mathcal D_N={(x_i,y_i)}_{i=1}^N$} denote a training dataset consisting of $N$ observed covariate–response pairs in $\mathcal{X} \times \mathcal{Y}$. Let $\Theta_N$ denote a class of predictors constructed from $\mathcal D_N$. Each predictor $T\in\Theta_N$ is assumed to induce, for every $x\in\mathcal X$, an empirical conditional distribution $\widehat{\mathbb P}_{T,N}(\cdot\mid x)$ over $\mathcal Y$.
The corresponding data-driven contextual stochastic optimization problem is then
\begin{equation}
    (\text{CSO}_N) \quad
    z^\star_N(x;T) \in
    \arg \min_{z \in \mathcal Z}
    \mathbb E_{\widehat{\mathbb P}_{T,N}(\cdot\mid x)}
    \bigl[c(z,Y)\bigr].
\end{equation}
In the robust variant of \citet{zhu2022joint}, the uncertainty set is induced by an estimation procedure, in which parameters whose empirical estimation loss is within a tolerance $r$ of the optimal estimation loss are treated as plausible inputs to the downstream optimization problem. We extend the same logic to a class of predictors or empirical distributions in the target-oriented setting.
To this end, let $g_k(z,y)$, $k=0,\ldots,K$, denote performance functions that are measurable in $y$ and, for each fixed $y$, finite, continuous, and convex in $z$ on a convex domain containing $\mathcal Z$. We set $g_0(z,y)\equiv c(z,y)$ and let $\tau_k\in\mathbb R$ denote the corresponding target levels.
For a fixed predictor $T\in\Theta_N$ and query point~$x^0\in\mathcal X$, we define the induced value of target $k$ as
\begin{equation}\label{eq:Gdist}
G_k(z,T;x^0)
:=
\mathbb E_{\widehat{\mathbb P}_{T,N}(\cdot\mid x^0)}
\bigl[g_k(z,Y)\bigr].
\end{equation}
When $T$ is a point predictor, we interpret it as inducing a degenerate distribution concentrated at $T(x^0)$, so that $G_k(z,T;x^0)=g_k(z,T(x^0))$.
The targets in \eqref{eq:Gdist} are expected values of the specified performance functions. For example, with a scalar order quantity $q$, the choices
\[
\begin{aligned}
    g_0(q,y)&=c_{\mathrm{u}}(y-q)^++c_{\mathrm{o}}(q-y)^+,\\
    g_1(q,y)&=(y-q)^+,\\
    g_2(q,y)&=\bigl(g_0(q,y)-a\bigr)^+
\end{aligned}
\]
give targets on expected cost, expected shortage, and expected cost in excess of a fixed threshold $a\in\mathbb R$, respectively, with underage and overage costs $c_{\mathrm{u}},c_{\mathrm{o}}>0$. All three functions are continuous and convex in $q$. The stockout indicator $\mathbf{1}_{\{y>q\}}$ is not convex in $q$, so a target on stockout probability is not covered by the pointwise convexity assumption.
To model estimation uncertainty, let $\rho(T;\mathcal D_N)$
denote the estimation loss of predictor~$T$ on the training set
$\mathcal D_N$, and define the nominal optimal loss
\begin{equation}\label{eq:nominal-loss}
\rho^\star
=\min_{T\in\Theta_N}\rho(T;\mathcal D_N).
\end{equation}
For a robustness radius, or loss tolerance, $r\geq 0$, define the corresponding set of near-optimal predictors as
\begin{equation}
\mathcal R_N(r)
=\{T\in\Theta_N:\rho(T;\mathcal D_N) \leq \rho^\star+r\}.
\label{eq:generic-rashomon-set}
\end{equation}
Thus, larger values of $r$ correspond to larger sets of predictive models whose estimation loss is close to the nominal optimum. In machine learning, $\mathcal R_N(r)$ corresponds to the Rashomon set of a predictor class $\Theta_N$.
Finally, we define the \textit{joint Rashomon and robustness optimization} (JRRO) problem as
\begin{subequations}\label{eq:starting}
\begin{align}
  \textnormal{(JRRO)}\qquad
  \sup_{z,r}\quad & r \\
  \textnormal{s.t.}\quad
  & G_k(z,T;x^0) \leq \tau_k, \quad \forall T\in\mathcal R_N(r),\quad k=0,\ldots,K, \label{eq:robust-starting}\\
  & z\in\mathcal Z,\\
  & 0\leq r\leq\bar r,
\end{align}
\end{subequations}
where $\bar{r}$ is a valid upper bound. The JRRO problem seeks a decision $z$ that satisfies all target constraints for the largest possible set of near-optimal predictors, where the size of this set is measured by the loss tolerance $r$. In this generalized formulation, the uncertainty set consists of predictors whose estimation loss lies within $r$ of the best attainable loss, and each such predictor induces a conditional predictive distribution at the query point $x^0$. The structure of the resulting problem depends on the form of functions $g_k(z,y)$, the Rashomon set $\mathcal{R}_N(r)$, and the feasible set $\mathcal{Z}$.

\subsection{Rashomon set of regression trees}
 
We specialize the predictor class to axis-aligned regression trees. A regression tree $T$ partitions the feature space $\mathcal X$ with a rooted binary split structure. Let $\mathcal I(T)$ and $\mathcal L(T)$ denote its sets of internal split nodes and terminal leaves, respectively. Each internal node $v\in\mathcal I(T)$ applies the split condition $a_v^\top x \leq b_v$, where $a_v \in \{0,1\}^{d_x}$ is a one-hot feature selector and $b_v\in\mathbb R$ is a threshold value, and routes a covariate vector $x$ to one of its children according to $a_v^\top x \le b_v$ (left branch) or $a_v^\top x > b_v$ (right branch). Each terminal leaf $\ell \in \mathcal L(T)$ assigns a constant prediction $\mu_\ell \in \mathbb R$ to the points it contains.
Let $\Theta_N$ denote the class of feasible regression trees of maximum depth $D$ constructed from $\mathcal D_N$ and satisfying a minimum leaf size requirement $N_{\min}\in\{1,\ldots,N\}$.
We define the historical observations that fall in a given region $A\subseteq\mathcal X$ as
\begin{equation}
\mathcal N(A)
=
\{i\in[N]:x_i\in A\},
\label{eq:region-index-set}
\end{equation}
where $[N]=\{1,\ldots,N\}$. The minimum leaf size condition imposes $|\mathcal N(\ell)|\geq N_{\min}$, for every $\ell\in\mathcal L(T)$.
A regression tree $T$ induces an empirical conditional distribution by assigning equal probability to the outcomes associated with the terminal leaf reached by the covariate vector~\citep{bertsimas2020predictive}. For $x\in\mathcal X$, let $\ell_T(x)\in\mathcal L(T)$ denote the terminal leaf reached by $x$. For contextual optimization, the relevant object is the empirical conditional distribution induced by $T$
\begin{equation}
\widehat{\mathbb P}_{T,N}(\cdot\mid x)
=
\frac{1}{|\mathcal N(\ell_T(x))|}
\sum_{i\in \mathcal N(\ell_T(x))}
\delta_{y_i}(\cdot),
\label{eq:tree-induced-empirical-distribution}
\end{equation}
where $\delta_{y_i}$ denotes the Dirac measure concentrated at $y_i$. 
Using only the fitted leaf mean replaces the empirical distribution \eqref{eq:tree-induced-empirical-distribution} by the degenerate distribution concentrated at
\[
    \overline  y_T(x)
    =
    \frac{1}{|\mathcal N(\ell_T(x))|}
    \sum_{i\in \mathcal N(\ell_T(x))}
    y_i,
\]
hence evaluating the target $k$ as
$
    \overline G_k(z,T;x)
    =
    g_k\bigl(z,\overline  y_T(x)\bigr).
$
The two quantities $G_k(z,T;x)$ and $\overline G_k(z,T;x)$ coincide when $g_k(z,\cdot)$ is affine in the uncertain outcome $y$. If $g_k(z,\cdot)$ is convex for fixed $z$, Jensen's inequality implies
\(
    \overline G_k(z,T;x) \leq G_k(z,T;x),
\)
so the fitted leaf mean can underestimate the value of a convex downstream cost or risk function, which can lead to invalid robustness certificates.
We now define the estimation loss associated with a tree partition. Fix a tree $T\in\Theta_N$. The corresponding regularized squared-error loss is
\begin{equation}
\rho(T;\mathcal D_N)
=
\min_{\{\mu_\ell\}_{\ell\in\mathcal L(T)}}
\left\{ \frac{1}{N} 
\sum_{i=1}^N
\bigl(y_i-\mu_{\ell_T(x_i)}\bigr)^2
\right\}
+
\lambda \Omega(T) = 
\frac{1}{N}
\sum_{\ell\in\mathcal L(T)}
\operatorname{SSE}\left(\mathcal N(\ell)\right)
+
\lambda|\mathcal I(T)|,
\label{eq:tree-loss}
\end{equation}
where the complexity measure $\Omega(T)$ corresponds to the number of internal splits, and $\lambda\geq 0$ is a regularization parameter.
The optimal loss~$\rho^\star$ is obtained by solving \eqref{eq:nominal-loss} through mixed-integer optimization or dynamic programming.
Since within-leaf squared error cannot exceed the total sum of squared errors (SSE), we impose the following upper bound on the radius $r$
\begin{equation}
    \bar{r} = \frac{1}{N} \sum_{i = 1}^N (y_i - \bar{y})^2 + \lambda (2^D-1) - \rho^\star,
    \label{eq:radius-cap}
\end{equation}
so that $\mathcal{R}_N(\bar r) = \Theta_N$. The extension to vector-valued leaf predictions $\mu_\ell\in\mathbb R^{d_y}$ is conceptually straightforward~\citep{levatic2018semi}. When the uncertain outcome is discrete, an analogous construction can be obtained using optimal classification trees~\citep{bertsimas2017optimal} by replacing the squared-error term in \eqref{eq:tree-loss} with a suitable classification loss. 

\section{Neighborhood reformulation and radius optimization}\label{sec:methodology}
In this section, we reduce the robust constraints~\eqref{eq:robust-starting} to a finite family indexed by query neighborhoods. We associate each attainable neighborhood with its minimum compatible estimation loss and characterize radius feasibility through the resulting entry levels. We also provide a sufficient condition for satisfying the targets over all attainable neighborhoods.

\subsection{Reduction to query neighborhoods}\label{sec:projection}
Once the query covariate vector $x^0$ is observed, each tree partition $T\in\Theta_N$ induces a local empirical distribution by routing~$x^0$ through the tree and selecting the historical observations that fall in the same terminal leaf. Fix a partition \(T\in\Theta_N\) and a query point
\(x^0\in\mathcal X\). We define the \emph{query neighborhood}
\begin{equation}
  S_T(x^0)
  :=\mathcal N\bigl(\ell_T(x^0)\bigr),
  \label{eq:query-leaf-index-set}
\end{equation}
that is, the tree-induced set of historical observations that belong to the same leaf as the query point. Thus, both the empirical conditional distribution and every downstream target value depend on \(T\) only through the query neighborhood \(S_T(x^0)\), and we refer to the map $T\mapsto S_T(x^0)$ as the query projection. This is the tree analogue of evaluating a learned parametric model at a query point.  In a linear regression model of the form $y=\beta^\top x+\varepsilon$, the query point enters through the scalar evaluation $f_\beta(x^0)=\beta^\top x^0$ of the fitted conditional mean. In the tree model, it instead
selects a terminal leaf and hence a local empirical distribution over the historical outcomes indexed by \(S_T(x^0)\).
Substituting the empirical conditional distribution in \eqref{eq:tree-induced-empirical-distribution} into \eqref{eq:Gdist} gives
\begin{equation}
G_k(z,T;x^0)
=
\frac{1}{|S_T(x^0)|}
\sum_{i\in S_T(x^0)}
g_k(z,y_i).
\label{eq:local-empirical-target}
\end{equation}
For a nonempty set $S\subseteq[N]$, define the average residual of target $k$ incurred with decision $z$ on $S$ as
\begin{equation}
  H_k(z,S)
  :=
  \frac{1}{|S|}\sum_{i\in S}\bigl(g_k(z,y_i)-\tau_k\bigr),
  \label{eq:residual}
\end{equation}
so that the local target constraint \mbox{$G_k(z,T;x^0)\leq\tau_k$} can be rewritten in terms of $S_T(x^0)$ as
\begin{equation}
  H_k\bigl(z,S_T(x^0)\bigr)\leq 0 .
  \label{eq:tree-cut}
\end{equation}
Since each $g_k(\cdot,y_i)$ is continuous and convex, every residual $H_k(\cdot,S)$ is also continuous and convex on a convex domain containing $\mathcal Z$. Consequently, each robust cut \eqref{eq:tree-cut} defines a closed subset of $\mathcal Z$, which is convex whenever $\mathcal Z$ is convex.
For a given query point $x^0$, define the collection of
attainable query neighborhoods by
\begin{equation}
  \mathcal S(x^0)
  :=\{S_T(x^0):T\in\Theta_N\}.
\label{eq:attainable-masks}
\end{equation}
For $S\in\mathcal S(x^0)$, define the minimum compatible tree loss and excess loss as
\begin{align}
  \rho_S(x^0)
  &:=\min_{\substack{T\in\Theta_N:\ S_T(x^0)=S}}
       \rho(T;\mathcal D_N),
       \label{eq:neighborhood-loss}\\
  e_S(x^0)
  &:=\rho_S(x^0)-\rho^\star.
       \label{eq:neighborhood-excess}
\end{align}
The minimum in \eqref{eq:neighborhood-loss} is attained, since both $\rho(T;\mathcal D_N)$ and $S_T(x^0)$ depend on $T$ only through the partition it induces on the finite set $\{ x_i : i \in [N]\} \cup\{x^0\}$, and there are finitely many such partitions. The excess loss $0 \leq e_S(x^0) \leq \bar r$ is the
smallest tolerance at which $S$ becomes admissible, and we refer to it as
the \textit{entry level} of $S$. For $r\geq 0$, the near-optimal neighborhood set is
\begin{equation}
  \mathcal S(x^0,r)
  :=\{S\in\mathcal S(x^0):e_S(x^0) \leq r\}.
\label{eq:mask-rashomon}
\end{equation}

We remark that \(\mathcal S(x^0,r)\) is the image of the Rashomon set \(\mathcal R_N(r)\) of regression trees under the query projection. Hence, enforcing the robust cuts \eqref{eq:tree-cut} over $\mathcal S(x^0,r)$ is the same as enforcing the target constraints \eqref{eq:robust-starting} over $\mathcal R_N(r)$.
For a given query point $x^0$, equations \eqref{eq:tree-cut} and \eqref{eq:mask-rashomon} yield the following formulation of JRRO
\begin{subequations}\label{eq:direct-mask-jero}
\begin{align}
  \textnormal{(ODT-JRRO)}\qquad
  \sup_{z,r}\quad & r \label{eq:jrro-obj}\\
  \textnormal{s.t.}\quad
  & H_k(z,S)\leq 0, \quad \forall S\in\mathcal S(x^0,r),\quad k=0,\ldots,K, \label{eq:masks}\\
  & z\in\mathcal Z, \label{eq:jrro-z}\\
  & 0\leq r\leq\bar r. \label{eq:jrro-r}
\end{align}
\end{subequations}
For fixed $r$, formulation~\eqref{eq:direct-mask-jero} is a feasibility problem over $\mathcal Z$ with finitely many neighborhood constraints, and is convex whenever $\mathcal Z$ is convex. The constraint family \eqref{eq:masks} changes only at the finitely many neighborhood entry
levels, and a neighborhood is already attainable at its own entry level. If feasibility first fails at a positive entry level, that level is excluded and every smaller nonnegative radius is feasible. Since the cap in \eqref{eq:radius-cap} includes all attainable neighborhoods, the cap is attained exactly when the full family of neighborhood constraints is feasible. Infeasibility at radius zero is treated separately below. The following subsection characterizes these cases through the finitely many entry levels.

\subsection{Radius optimization over entry levels}\label{sec:optmodel}
For a decision $z\in\mathcal Z$, define the first loss threshold at which
some target fails, that is
\begin{equation}
  r_{\mathrm{brk}}(z;x^0)
  :=
  \min
  \left\{
    e_S(x^0):
    S\in\mathcal S(x^0),\
    H_k(z,S)>0
    \text{ for some }k
  \right\},
  \label{eq:decision-breakdown}
\end{equation}
with $r_{\mathrm{brk}}(z;x^0)=+\infty$ if no attainable neighborhood
violates any target. We define the master problem as
\begin{equation}
  r^\star
  :=
  \min\Bigl\{\,\bar r,\ \max_{z\in\mathcal Z} r_{\mathrm{brk}}(z;x^0)\Bigr\}.
  \label{eq:optimal-breakdown}
\end{equation}
When $\mathcal Z$ is compact and every $H_k(\cdot,S)$ is continuous, the
inner maximum is attained and $r^\star$ coincides with the supremum of
ODT-JRRO whenever the latter is feasible.
We note that the set \mbox{$\{e_S(x^0):S\in\mathcal S(x^0)\}$} is finite, and
$\mathcal S(x^0,r)$ changes only when $r$ crosses one of its elements. Two
radii lying between consecutive entry levels induce identical constraint
families in \eqref{eq:masks}, so that optimizing $r$ requires examining only the finitely many attainable levels. Therefore, let
$0=\eta_1<\cdots<\eta_P$ denote the distinct entry levels, and set
$\eta_{P+1}:=+\infty$. By \eqref{eq:mask-rashomon}, the near-optimal
families attainable at these levels are exactly the sets
$\mathcal S(x^0,\eta_p)$, for $p\in[P]$. We accordingly define the feasible
set at level $\eta_p$ as
\begin{equation}
  \mathcal F_p:=\bigl\{z\in\mathcal Z: H_k(z,S)\leq0,\ \forall S\in\mathcal S(x^0,\eta_p),\ \forall k\bigr\}.
  \label{eq:prefix}
\end{equation}

\begin{proposition}\label{prop:closed-form}
The near-optimal families satisfy $\mathcal S(x^0,\eta_1)\subset\cdots\subset\mathcal S(x^0,\eta_P)=\mathcal S(x^0)$, and $\mathcal F_1\supseteq\cdots\supseteq\mathcal F_P$, each $\mathcal F_p$ being convex whenever $\mathcal Z$ is. There exists an index $p^\star\in\{0,\ldots,P\}$ such
that
\(
\{p\in[P]:\mathcal F_p\neq\varnothing\}
=
\{1,\ldots,p^\star\},
\)
with the convention $\{1,\ldots,0\}:=\varnothing$. Then
\begin{enumerate}
    \item If $p^\star=0$, then
    $r_{\mathrm{brk}}(z;x^0)=0$ for every $z\in\mathcal Z$,
    and ODT-JRRO is infeasible.

    \item If $1\le p^\star<P$, then every
    $z^\star\in\mathcal F_{p^\star}$ attains
    \(
    r_{\mathrm{brk}}(z^\star;x^0)
    =
    \eta_{p^\star+1}.
    \)
    The feasible radii of ODT-JRRO are
    $[0,\eta_{p^\star+1})$,
    and $r^\star = \eta_{p^\star+1}$.

    \item If $p^\star=P$, then every
    $z^\star\in\mathcal F_P$ satisfies all attainable
    neighborhoods and
    \(
    r_{\mathrm{brk}}(z^\star;x^0)=+\infty.
    \)
    In this case, $(z^\star,\bar r)$ is feasible for ODT-JRRO, which therefore attains
    the cap $r^\star =\bar r$.
\end{enumerate}
\end{proposition}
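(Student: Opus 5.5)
The plan is to first settle the structural monotonicity claims and then handle the three cases by relating $r_{\mathrm{brk}}$ to the nested feasible sets $\mathcal F_p$. Since $\eta_1<\cdots<\eta_P$, definition \eqref{eq:mask-rashomon} gives $\mathcal S(x^0,\eta_p)\subseteq\mathcal S(x^0,\eta_{p+1})$. The inclusion is strict because $\eta_{p+1}$ is by definition an entry level $e_S(x^0)$ of some $S\in\mathcal S(x^0)$, and that $S$ is not in $\mathcal S(x^0,\eta_p)$. Because $\eta_P$ is the largest entry level, $\mathcal S(x^0,\eta_P)=\mathcal S(x^0)$. Adding constraints can only shrink the feasible set, so $\mathcal F_p\supseteq\mathcal F_{p+1}$. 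Each $\mathcal F_p$ is an intersection of $\mathcal Z$ with finitely many sublevel sets $\{z:H_k(z,S)\le 0\}$ of convex continuous functions, so it is closed and convex whenever $\mathcal Z$ is. Nestedness then implies that $\{p:\mathcal F_p\neq\varnothing\}$ is an initial segment $\{1,\dots,p^\star\}$.

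The key link I would establish next is the identity
\[
r_{\mathrm{brk}}(z;x^0)>\eta_p \iff z\in\mathcal F_p .
\]
Indeed, $z\in\mathcal F_p$ means that no $S$ with $e_S(x^0)\le\eta_p$ is violated, which is exactly the statement that the minimum in \eqref{eq:decision-breakdown} exceeds $\eta_p$. This minimum is over a finite set, so it is attained or equals $+\infty$. Moreover, $r_{\mathrm{brk}}(z;x^0)$ always takes a value in $\{\eta_1,\dots,\eta_P,+\infty\}$.

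\emph{Case 1} ($p^\star=0$): here $\mathcal F_1=\varnothing$. By the identity, every $z$ has $r_{\mathrm{brk}}\le\eta_1=0$, hence $r_{\mathrm{brk}}=0$. ODT-JRRO is infeasible because any feasible $(z,r)$ with $r\ge0$ requires $z\in\mathcal F_1$, since $\mathcal S(x^0,0)\subseteq\mathcal S(x^0,r)$.

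\emph{Case 2} ($1\le p^\star<P$): take $z^\star\in\mathcal F_{p^\star}\setminus\mathcal F_{p^\star+1}$, noting that $\mathcal F_{p^\star+1}=\varnothing$. Then $r_{\mathrm{brk}}(z^\star)>\eta_{p^\star}$ but $r_{\mathrm{brk}}(z^\star)\le\eta_{p^\star+1}$. Since values lie in the level set, this gives $r_{\mathrm{brk}}(z^\star)=\eta_{p^\star+1}$. For feasible radii, any $r\in[0,\eta_{p^\star+1})$ has $\mathcal S(x^0,r)=\mathcal S(x^0,\eta_p)$ for the largest $\eta_p\le r$, with $p\le p^\star$, so $(z^\star,r)$ is feasible. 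Conversely, any $r\ge\eta_{p^\star+1}$ would require $z\in\mathcal F_{p^\star+1}=\varnothing$. It remains to check that $\eta_{p^\star+1}\le\bar r$, which holds since all entry levels lie in $[0,\bar r]$. Consequently $\max_z r_{\mathrm{brk}}=\eta_{p^\star+1}$, because no $z$ can exceed it, and $r^\star=\eta_{p^\star+1}$.

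\emph{Case 3} ($p^\star=P$): $z^\star\in\mathcal F_P$ satisfies every constraint over $\mathcal S(x^0)$, so the set in \eqref{eq:decision-breakdown} is empty and $r_{\mathrm{brk}}=+\infty$. Because $\mathcal S(x^0,\bar r)\subseteq\mathcal S(x^0)$, the pair $(z^\star,\bar r)$ is feasible, and $r^\star=\min\{\bar r,+\infty\}=\bar r$.

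The main subtlety, rather than an obstacle, is Case 2. There one must be careful that the supremum $\eta_{p^\star+1}$ is not attained by ODT-JRRO (the interval is open), even though $r^\star$ as defined via $r_{\mathrm{brk}}$ equals it. One must also confirm that every radius between consecutive levels inherits feasibility, which uses $\eta_1=0$ so that every $r\ge0$ has a well-defined largest level below it.
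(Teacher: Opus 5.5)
Your proposal is correct and follows essentially the same route as the paper. Both arguments use strict nestedness of the families, the initial-segment definition of $p^\star$, constancy of $\mathcal S(x^0,r)$ between consecutive entry levels, and $\eta_P\le\bar r$; your equivalence $r_{\mathrm{brk}}(z;x^0)>\eta_p \iff z\in\mathcal F_p$, combined with the finite range of $r_{\mathrm{brk}}$, is a compact restatement of the paper's argument that each $z\in\mathcal F_{p^\star}$ meets every cut entering before $\eta_{p^\star+1}$ and violates one entering at that level. The one fact you cite rather than prove, $\eta_P\le\bar r$, the paper justifies inside the proof: within-leaf squared error never exceeds the total sum of squares, and a depth-$D$ tree has at most $2^D-1$ splits.
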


\begin{proof}
By \eqref{eq:mask-rashomon}, the near-optimal neighborhood families
are nested. The inclusions between successive families are strict,
since each distinct entry level admits at least one new neighborhood,
and $\mathcal S(x^0,\eta_P)=\mathcal S(x^0)$. Enlarging the neighborhood
family adds constraints, so $\mathcal F_{p+1}\subseteq\mathcal F_p$.
Each $\mathcal F_p$ is convex whenever $\mathcal Z$ is convex, because
the residuals $H_k(\cdot,S)$ are convex. Nestedness also implies that
the nonempty sets form an initial segment, which defines $p^\star$
as stated.
Every feasible tree has within-leaf squared error at most
$\sum_{i=1}^N(y_i-\bar y)^2$ and at most $2^D-1$ internal splits.
Thus the cap in \eqref{eq:radius-cap} satisfies $\eta_P\le\bar r$.
Since $\eta_1=0$ and $\eta_{P+1}=+\infty$, the intervals
$[\eta_p,\eta_{p+1})$, $p\in[P]$, partition $[0,+\infty)$.
The neighborhood family is constant on each such interval:
$\mathcal S(x^0,r)=\mathcal S(x^0,\eta_p)$ whenever
$r\in[\eta_p,\eta_{p+1})$. Consequently, a radius in this interval
is feasible for ODT-JRRO if and only if
$\mathcal F_p\neq\varnothing$ and $r\le\bar r$.
If $p^\star=0$, then $\mathcal F_1=\varnothing$, so every decision
violates at least one constraint entering at level zero.
Hence every decision has breakdown threshold zero, and ODT-JRRO
is infeasible.
If $1\le p^\star<P$, every $z\in\mathcal F_{p^\star}$ satisfies
all constraints entering strictly before $\eta_{p^\star+1}$.
Since $\mathcal F_{p^\star+1}=\varnothing$, each such decision
violates at least one constraint entering at $\eta_{p^\star+1}$
and therefore has breakdown threshold exactly $\eta_{p^\star+1}$.
Because $\eta_{p^\star+1}\le\bar r$, the feasible radii are
exactly $[0,\eta_{p^\star+1})$. Their supremum is therefore
$r^\star=\eta_{p^\star+1}$ and is not attained.
Finally, if $p^\star=P$, every $z\in\mathcal F_P$ satisfies
all attainable neighborhood constraints and has
$r_{\mathrm{brk}}(z;x^0)=+\infty$. Thus $(z,\bar r)$ is feasible
for every such decision, and $r^\star=\bar r$ is attained.
\end{proof}

Proposition \ref{prop:closed-form} reduces ODT-JRRO to the identification of a single
integer $p^\star$ by a monotone scan over the finitely many entry levels.
Since the near-optimal families are nested, feasibility is monotone in $p$,
and each test is a feasibility problem over the intersection of
$\mathcal Z$ with finitely many convex cuts, hence a convex feasibility
problem whenever $\mathcal Z$ is convex. In the second case, the optimal
radius is an excluded endpoint. Every tree with excess loss strictly below
$\eta_{p^\star+1}$ is certified, while no decision is feasible at the
endpoint itself.
Since every point of $\mathcal F_{p^\star}$ has breakdown threshold $r^\star$, we resolve the tie by selecting the decision with the lowest nominal cost under the distribution induced by a nominal optimal tree $T^0$
\begin{equation}
  z^\star\in\arg\min_{z\in\mathcal F_{p^\star}}\ H_0\bigl(z,S_{T^0}(x^0)\bigr).
  \label{eq:lexicographic}
\end{equation}
The nominal tree is selected by a fixed rule if the optimum is not unique and is kept unchanged throughout the algorithm. Figure~\ref{fig:scan} illustrates the monotone scan over entry levels and the nested structure of feasible sets.
\begin{figure}[t]
\centering
\begin{tikzpicture}[
  x=1.2cm,y=0.6cm,
  lbl/.style={font=\small},
  slbl/.style={font=\footnotesize},
  tick/.style={draw=black!55,line width=0.4pt,dashed,dash pattern=on 2pt off 2pt}
]
\def\ea{0} \def\eb{2.2} \def\ec{4.0} \def\ed{6.3} \def\ee{8.2}
\def\zs{3.20}

% nested feasible sets, one band per entry level
\fill[gray!14] (\ea,0.7) rectangle (\eb,5.3);
\fill[gray!26] (\eb,1.4) rectangle (\ec,5.3);
\fill[gray!40] (\ec,1.4) rectangle (\ed,4.2);
\fill[gray!56] (\ed,2.7) rectangle (\ee,4.2);

% staircase boundary
\draw[line width=0.9pt] (\ea,5.3) -- (\ec,5.3) -- (\ec,4.2) -- (\ee,4.2);
\draw[line width=0.9pt] (\ea,0.7) -- (\eb,0.7) -- (\eb,1.4) -- (\ed,1.4) -- (\ed,2.7) -- (\ee,2.7);

\foreach \x in {\eb,\ec,\ed} {\draw[tick] (\x,0) -- (\x,5.7);}
\draw[line width=1.1pt] (\ee,0) -- (\ee,5.7);

\draw[-{Latex[length=2mm]},line width=0.6pt] (0,0) -- (10.5,0) node[right,lbl] {$r$};
\draw[-{Latex[length=2mm]},line width=0.6pt] (0,0) -- (0,6.1) node[above,lbl] {$z$};

% band labels, pinned just inside the upper edge of each band
\node[slbl,anchor=north,yshift=-0.6mm] at ({(\ea+\eb)/2},5) {$\mathcal F_1$};
\node[slbl,anchor=north,yshift=-0.6mm] at ({(\eb+\ec)/2},5) {$\mathcal F_2$};
\node[slbl,anchor=north,yshift=-0.6mm] at ({(\ec+\ed)/2},4.2) {$\mathcal F_3$};
\node[slbl,anchor=north,yshift=-0.6mm] at ({(\ed+\ee)/2},4.2) {$\mathcal F_{p^\star}$};
\node[slbl,anchor=west,xshift=1.5mm] at (\ee,3.55) {$\mathcal F_{p^\star+1}=\varnothing$};

% neighborhood entering at each level
\node[slbl,anchor=south] at (0.5,5.75) {$S_1$};
\node[slbl,anchor=south] at (\eb,5.75) {$S_2$};
\node[slbl,anchor=south] at (\ec,5.75) {$S_3$};
\node[slbl,anchor=south] at (\ed,5.75) {$S_4$};
\node[slbl,anchor=south] at (\ee,5.75) {$S_5$};

\foreach \x/\t in {\ea/{$\eta_1=0$},\eb/{$\eta_2$},\ec/{$\eta_3$},\ed/{$\eta_4$}}
  {\draw (\x,0) -- ++(0,-1.5mm);
   \node[slbl,anchor=north,yshift=-2mm] at (\x,0) {\t};}
\draw (\ee,0) -- ++(0,-1.5mm);
\node[slbl,anchor=north,yshift=-2mm] at (\ee,0) {$\eta_5=r^\star$};

% certified radii
\draw[line width=1.6pt] (0,0) ++(0,-10mm) -- ++({\ee*1.2cm},0);
\fill (0,0) ++(0,-10mm) circle (1.7pt);
\draw[fill=white,line width=0.9pt] (\ee,0) ++(0,-10mm) circle (1.9pt);
\node[slbl,anchor=north,yshift=-11.5mm] at ({\ee/2},0) {certified radii $[0,r^\star)$};

% tie-break decision
\draw[dash pattern=on 2pt off 1.5pt,line width=0.6pt] (0,\zs) -- ({(\ed+\ee)/2},\zs);
\fill ({(\ed+\ee)/2},\zs) circle (2pt);
\node[slbl,anchor=east,xshift=-1mm] at (0,\zs) {$z^\star$};
\end{tikzpicture}

\caption{The monotone scan of Proposition \ref{prop:closed-form}, drawn for a
scalar decision. The neighborhood $S_5$ empties the feasible set at $\eta_5$, which is the supremum radius $r^\star$, and the decision $z^\star$ is selected
from $\mathcal F_{p^\star}$ by tie-breaking.}
\label{fig:scan}
\end{figure}

\subsection{Minimum leaf size and worst-case target values}
\label{sec:min-leaf-envelope}
The minimum leaf size limits how strongly a query
neighborhood can concentrate on observations with large
target values. To examine its role in worst-case target
evaluation, we relax tree attainability and consider all
subsets containing at least $N_{\min}$ observations.
This relaxation yields a conditional value-at-risk (CVaR) upper envelope under the
unconditional empirical distribution
\[
    \widehat{\mathbb P}_N
    :=\frac{1}{N}\sum_{i=1}^N\delta_{y_i}.
\]
The following proposition provides a reference for assessing
the additional conservatism introduced by disregarding
the restrictions imposed by the tree class.

\begin{proposition}\label{prop:cvar}
Fix $z\in\mathcal Z$ and a target $k$. Then
\begin{equation}
    \max_{S\in\mathcal S(x^0)}H_k(z,S)
    \leq
    \operatorname{CVaR}_{N_{\min}/N}
    \bigl[g_k(z,Y)\bigr]-\tau_k,
    \label{eq:cvar-envelope}
\end{equation}
where CVaR is evaluated under $\widehat{\mathbb P}_N$,
with $N_{\min}/N$ denoting the upper-tail fraction.
\end{proposition}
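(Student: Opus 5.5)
The plan is to relax tree attainability and bound the average of $g_k(z,y_i)$ over any index set of size at least $N_{\min}$ by the empirical upper-tail CVaR at level $\alpha:=N_{\min}/N$.

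\emph{Step 1: reduce to a set-size bound.} Every $S\in\mathcal S(x^0)$ equals $\mathcal N(\ell_T(x^0))$ for some $T\in\Theta_N$. By the minimum leaf size requirement, $|S|\ge N_{\min}$. Since $H_k(z,S)=\frac{1}{|S|}\sum_{i\in S}g_k(z,y_i)-\tau_k$, it suffices to show
\[
\frac{1}{|S|}\sum_{i\in S}a_i\le \operatorname{CVaR}_{\alpha}[g_k(z,Y)], \qquad a_i:=g_k(z,y_i),
\]
for every $S\subseteq[N]$ with $|S|\ge N_{\min}$.

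\emph{Step 2: use the dual representation of CVaR.} I would avoid sorted-order or quantile formulas, because ties and non-integer $N\alpha$ make them awkward. Instead I would use the representation of upper-tail CVaR under the discrete empirical measure $\widehat{\mathbb P}_N$:
\[
\operatorname{CVaR}_\alpha[g_k(z,Y)]=\max\Bigl\{\sum_{i=1}^N q_i a_i:\ 0\le q_i\le \tfrac{1}{N\alpha},\ \sum_i q_i=1\Bigr\}.
\]
Equivalently, this is the Rockafellar--Uryasev formula $\min_t\{t+\frac{1}{\alpha N}\sum_i(a_i-t)^+\}$, which is the LP dual of the maximization above. With $\alpha=N_{\min}/N$, the bound on the weights becomes $q_i\le 1/N_{\min}$.

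\emph{Step 3: exhibit a feasible weight vector.} Set $q_i=1/|S|$ for $i\in S$ and $q_i=0$ otherwise. The weights sum to one, and each satisfies $q_i\le 1/|S|\le 1/N_{\min}$. This vector is feasible, so its objective value $\frac{1}{|S|}\sum_{i\in S}a_i$ is at most the maximum, which is the CVaR. Subtracting $\tau_k$ and taking the maximum over $S\in\mathcal S(x^0)$ (a finite, nonempty family) gives \eqref{eq:cvar-envelope}.

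\emph{Main obstacle.} The only real point of care is fixing the CVaR convention: upper-tail fraction $\alpha$, and the equivalence between the primal (Rockafellar--Uryasev) form and the dual weight form for discrete distributions. If a self-contained check is wanted, I would argue directly from the primal formula. For any $t$,
\[
\frac{1}{|S|}\sum_{i\in S}a_i\le t+\frac{1}{|S|}\sum_{i\in S}(a_i-t)^+\le t+\frac{1}{N_{\min}}\sum_{i=1}^N(a_i-t)^+,
\]
using $|S|\ge N_{\min}$ and nonnegativity of $(\cdot)^+$. Minimizing the right-hand side over $t$ gives exactly $\operatorname{CVaR}_{N_{\min}/N}$. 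This route avoids the duality entirely, and I would present it as the main argument.
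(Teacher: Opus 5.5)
Your proof is correct, and your main argument is more direct than the paper's. The paper first relaxes to $\mathcal U=\{S\subseteq[N]:|S|\geq N_{\min}\}$ and computes both sides exactly. It sorts the residuals and shows that the top-$s$ average $A(s)$ is nonincreasing, so that $\max_{S\in\mathcal U}H_k(z,S)=A(N_{\min})$. It then proves $\operatorname{CVaR}_{N_{\min}/N}=A(N_{\min})$ by showing $F(t)\geq A(N_{\min})$ for every $t$, with equality at $t=w_{(N_{\min})}$. The inclusion $\mathcal S(x^0)\subseteq\mathcal U$ finishes the proof.

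Your chain uses the same two ingredients as the paper's lower bound on $F$, namely $a^+\geq a$ and discarding nonnegative terms. You apply them to an arbitrary $S$ with $|S|\geq N_{\min}$, so the factor $1/|S|\leq 1/N_{\min}$ takes the place of the monotonicity of $A(s)$. No sorting, tie handling, or choice of a particular $t$ is needed, and the argument is shorter and fully sufficient for the stated inequality.

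What your route does not give is the equality $\max_{S\in\mathcal U}H_k(z,S)=\operatorname{CVaR}_{N_{\min}/N}[g_k(z,Y)]-\tau_k$, with the CVaR identified as the mean of the $N_{\min}$ largest values. The paper relies on this tightness to attribute the conservatism of \eqref{eq:cvar-envelope} entirely to subsets that are not attainable query neighborhoods. Its remark that the envelope is nonincreasing in $N_{\min}$ because it averages a larger fraction of the ordered values also rests on that explicit formula. Your weight formulation in Steps 2--3 would recover the tightness if you added that placing weight $1/N_{\min}$ on the $N_{\min}$ largest $a_i$ is optimal for the weight problem.

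Finally, your concern about non-integer $N\alpha$ does not arise, since $N\alpha=N_{\min}$ is an integer. That is why the paper's sorted-order computation goes through cleanly.
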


\begin{proof}
Every leaf receives at least $N_{\min}$ observations, so
$\mathcal S(x^0)\subseteq
\mathcal U:=\{S\subseteq[N]:|S|\geq N_{\min}\}$.
Write $w_i=g_k(z,y_i)-\tau_k$ and order the residuals as
$w_{(1)}\geq\cdots\geq w_{(N)}$.
Among index sets of fixed size $s$, the average $H_k(z,S)$
is maximized by the $s$ largest residuals, with value
$A(s)=\frac1s\sum_{j=1}^s w_{(j)}$.
For $1\leq s<N$, we have $w_{(s+1)}\leq A(s)$, hence
\(
  A(s+1)=\frac{sA(s)+w_{(s+1)}}{s+1}\leq A(s).
\)
Thus $A$ is nonincreasing and
$\max_{S\in\mathcal U}H_k(z,S)=A(N_{\min})$.
For a finite-support random variable $V$ and an upper-tail fraction
$\varepsilon\in(0,1]$, we recall
\begin{equation}
  \operatorname{CVaR}_{\varepsilon}(V)
  =\min_{t\in\mathbb R}
  \Bigl\{t+\tfrac1\varepsilon\,
  \mathbb E\bigl[(V-t)^+\bigr]\Bigr\},
  \label{eq:ru}
\end{equation}
which is the mean of the worst $\varepsilon$-fraction of
outcomes~\citep{rockafellar2000optimization}.
Under $\widehat{\mathbb P}_N$, with $\varepsilon=N_{\min}/N$
and $V=g_k(z,Y)-\tau_k$, the objective in~\eqref{eq:ru} is
\(
  F(t)=t+\frac1{N_{\min}}\sum_{i=1}^N(w_i-t)^+.
\)
Discarding the nonnegative terms beyond the first $N_{\min}$
ordered residuals and using $a^+\geq a$ gives, for every
$t\in\mathbb R$,
\[
  F(t)\geq
  t+\frac1{N_{\min}}\sum_{j=1}^{N_{\min}}(w_{(j)}-t)
  =A(N_{\min}).
\]
At $t=w_{(N_{\min})}$, the first $N_{\min}$ residuals are at least
$t$ and all remaining residuals are at most $t$, so equality holds.
Therefore,
$\operatorname{CVaR}_{N_{\min}/N}[g_k(z,Y)-\tau_k]=A(N_{\min})$.
Translation equivariance of CVaR yields
\(
  \max_{S\in\mathcal U}H_k(z,S)
  =\operatorname{CVaR}_{N_{\min}/N}[g_k(z,Y)]-\tau_k.
\)
The inclusion $\mathcal S(x^0)\subseteq\mathcal U$ then gives
the stated bound.
\end{proof}

The bound retains the minimum leaf size restriction but
allows subsets that need not be attainable query
neighborhoods. Its conservatism therefore depends on how
strongly the tree restrictions exclude subsets with large
average target values. For a fixed decision and training
sample, the envelope is nonincreasing in $N_{\min}$,
since it averages a larger fraction of the ordered
target values. This does not imply that the optimal
radius is nondecreasing, as changing $N_{\min}$ also changes
the nominal loss $\rho^\star$ from which the radius is
measured.

\section{Dynamic programming and constraint generation}\label{sec:dp}
 
We solve ODT-JRRO by constraint generation, with separation performed over query paths. We first show how to compute the minimum estimation loss compatible with a given query path by independently optimizing the sibling subtrees. The required subtree values are computed on demand by dynamic programming, memoized, and reused across separation calls.
The algorithm alternates between a restricted master problem and separation. The restricted master uses the entry levels of the neighborhoods discovered so far to determine a candidate decision and radius. Given this candidate, separation searches for an undiscovered neighborhood that violates a target and enters at or below the candidate radius. A newly identified neighborhood is added to the restricted master, and the process is repeated.

\subsection{Query-path decomposition}\label{sec:paths}
Since the downstream problem depends only on the leaf to which the query point is routed, it is convenient to describe a tree partition in terms of paths. A \textit{query path} is an ordered sequence $\pi=\bigl((a_1,b_1),\ldots,(a_{|\pi|},b_{|\pi|})\bigr)$ of splits with $|\pi|\leq D$, and we write $\pi_T$ for the path followed by $x^0$ in the tree $T$, that is, the sequence of splits encountered on the way from the root to $\ell_T(x^0)$. Since the branching direction at each node is determined by $x^0$ itself, the path $\pi$ determines the half-spaces
\begin{equation}
  \mathcal H_j(\pi)
  =
  \begin{cases}
    \{x\in\mathcal X: a_j^\top x \leq b_j\}, & \text{if } a_j^\top x^0 \leq b_j,\\[2pt]
    \{x\in\mathcal X: a_j^\top x > b_j\}, & \text{otherwise,}
  \end{cases}
  \qquad j=1,\ldots,|\pi|,
  \label{eq:halfspaces}
\end{equation}
and hence the query region and the sibling regions
\begin{equation}
  R(\pi)=\bigcap_{j=1}^{|\pi|}\mathcal H_j(\pi),
  \qquad
  Q_j(\pi)=\Bigl(\bigcap_{l<j}\mathcal H_l(\pi)\Bigr)\cap \mathcal H_j(\pi)^{c},
  \quad j=1,\ldots,|\pi|,
  \label{eq:box-siblings}
\end{equation}
with $R(\varnothing)=\mathcal X$. By construction $x^0\in R(\pi)$, and $\{R(\pi),Q_1(\pi),\ldots,Q_{|\pi|}(\pi)\}$ is a partition of $\mathcal X$. Figure \ref{fig:query-path} illustrates these objects for $D=2$ and $d_x=2$.
\begin{figure}[t]
\centering
\definecolor{pathblue}{RGB}{0,0,0}
\begin{tikzpicture}[
  obs/.style={circle, fill=black, inner sep=0pt, minimum size=3pt},
  qpt/.style={rectangle, draw=black, fill=white, line width=0.6pt,
              inner sep=1.7pt, rotate=45},
  lbl/.style={font=\small},
  slbl/.style={font=\footnotesize},
  scale = 0.8
]

%======================= panel (a): the partition of X =======================
\begin{scope}[scale=5]
  % regions
  \fill[gray!12]  (0,0)       rectangle (0.42,1);     % Q_1(pi)
  \fill[gray!28]  (0.42,0.63) rectangle (1,1);        % Q_2(pi)
  \fill[gray!55]  (0.42,0)    rectangle (1,0.63);     % R(pi)

  % frame and split lines
  \draw[black, line width=0.5pt] (0,0) rectangle (1,1);
  \draw[black, line width=0.9pt] (0.42,0) -- (0.42,1);
  \draw[black, line width=0.9pt] (0.42,0.63) -- (1,0.63);

  % observations: N(Q_1) = 6
  \node[obs] at (0.08,0.15) {}; \node[obs] at (0.15,0.30) {};
  \node[obs] at (0.22,0.82) {}; \node[obs] at (0.30,0.35) {};
  \node[obs] at (0.12,0.68) {}; \node[obs] at (0.36,0.10) {};
  % observations: N(Q_2) = 4
  \node[obs] at (0.55,0.78) {}; \node[obs] at (0.87,0.90) {};
  \node[obs] at (0.88,0.70) {}; \node[obs] at (0.62,0.95) {};
  % observations: N(R) = 4
  \node[obs] at (0.52,0.22) {}; \node[obs] at (0.66,0.50) {};
  \node[obs] at (0.80,0.55) {}; \node[obs] at (0.90,0.12) {};

  % query point
  \node[qpt] at (0.60,0.13) {};
  \node[slbl, anchor=west] at (0.61,0.13) {$x^0$};

  % region labels
  \node[lbl] at (0.21,0.50) {$Q_1(\pi)$};
  \node[lbl] at (0.71,0.82) {$Q_2(\pi)$};
  \node[lbl] at (0.72,0.35) {$R(\pi)$};

  % threshold ticks
  \node[slbl, anchor=north] at (0.42,-0.02) {$b_1$};
  \node[slbl, anchor=east]  at (-0.02,0.63) {$b_2$};
  \node[slbl, anchor=north] at (0.5,-0.10) {$f_1$};
  \node[slbl, anchor=east, rotate=90] at (-0.10,0.5) {$f_2$};

  %\node[lbl] at (0.5,-0.25) {(a) partition of $\mathcal X$ induced by $\pi$};
\end{scope}

%======================= panel (b): the corresponding tree ===================
\begin{scope}[xshift=11cm, yshift=4cm, scale=1,
              lv/.style={line width=1.3pt},
              qpath/.style={draw=pathblue, line width=1.7pt},
              lv2/.style={line width=0.5pt}]
              
  \coordinate (n1) at (0,0);
  \coordinate (n2) at (-2,-1.5);
  \coordinate (n3) at (2,-1.85);
  \coordinate (n4) at (0.5,-3.25);
  \coordinate (n5) at (3.5,-3.25);

  % path edges and shed edges
  \draw[lv2]   (n1) -- (n2);
  \draw[qpath] (n1) -- (n3);
  \draw[qpath] (n3) -- (n4);
  \draw[lv2]   (n3) -- (n5);

  % internal nodes on the query path
  \node[draw, circle, fill=white, inner sep=1.5pt, font=\footnotesize] at (n1)
       {$a_1^\top x\!\le\! b_1$};
  \node[draw, circle, fill=white, inner sep=1.5pt, font=\footnotesize] at (n3)
       {$a_2^\top x\!\le\! b_2$};

  % shed leaf at level 1
  \draw[lv2, fill=gray!12]
        ($(n2)+(-1.1,0)$)
        rectangle
        ($(n2)+(1.1,-0.8)$);
  \node[font=\footnotesize] at ($(-2,-1.515)+(0,-0.4)$)
        %{$\Psi_{1}\bigl(Q_1(\pi)\bigr)$};
        {$Q_1(\pi)$};

  % shed leaf at level 2: budget D-2 = 0, necessarily a single leaf
  \draw[lv2, fill=gray!28] ($(n5)+(-1.1,0)$) rectangle ($(n5)+(1.1,-0.8)$);
  \node[font=\footnotesize] at ($(3.5,-3.275)+(0,-0.4)$) %{$\Psi_{0}\bigl(Q_2(\pi)\bigr)$};
  {$Q_2(\pi)$};

  % query leaf
  \draw[lv, fill=gray!55] ($(n4)+(-1.1,0)$) rectangle ($(n4)+(1.1,-0.8)$);
  \node[font=\footnotesize] at ($(0.5,-3.275)+(0,-0.4)$) {$R(\pi)$};

  % branch annotations
  \node[slbl, anchor=south east] at ($(n1)!0.5!(n2)$) {$\le$};
  \node[slbl, anchor=south west] at ($(n1)!0.5!(2,-1.375)$) {$>$};
  \node[slbl, anchor=south east] at ($(n3)!0.5!(n4)$) {$\le$};
  \node[slbl, anchor=south west] at ($(n3)!0.5!(3.5,-3.15)$) {$>$};

  %\node[lbl] at (0.3,-3.5) {(b) a tree realizing $\pi$};
\end{scope}

\end{tikzpicture}

\caption{Query path, query region, and sibling regions for $D=2$, $d_x=2$ and
$N_{\min}=3$. The path $\pi=\bigl((a_1,b_1),(a_2,b_2)\bigr)$ retains at each
level the half-space $\mathcal H_j(\pi)$ containing $x^0$, here
$\mathcal H_1(\pi)=\{a_1^\top x > b_1\}$ and $\mathcal H_2(\pi)=\{a_2^\top x \le b_2\}$. Their intersection is the query region $R(\pi)$, whose observations form the query neighborhood
$S_T(x^0)=\mathcal N(R(\pi))$, while the complementary half-spaces yield the
sibling regions $Q_1(\pi)$ and $Q_2(\pi)$.}
\label{fig:query-path}
\end{figure}
The terminal leaf of $x^0$ in $T$ is the intersection of the half-spaces
selected along its root-to-leaf path, that is, $R(\pi_T)$, and an observation
reaches that leaf if and only if it satisfies the same branching conditions. The
query neighborhood \eqref{eq:query-leaf-index-set} is therefore
\mbox{$S_T(x^0)=\mathcal N\bigl(R(\pi_T)\bigr)$}, so that two trees sharing
the same query path agree on the query leaf and the observations it contains, and differ
only in the way they partition the sibling regions. The thresholds $b_j$ in \eqref{eq:halfspaces} may be restricted to a finite
menu without changing any attainable loss or query neighborhood. Indeed, for
each feature $f\in[d_x]$ it is sufficient to place one threshold between
consecutive distinct values of
\begin{equation}
  V_f=\{x_{if}:i\in[N]\}\cup\{x^0_f\},
  \label{eq:menu}
\end{equation}
since two thresholds lying between the same pair of consecutive values
induce the same partition of $\{ x_i : i \in [N]\} \cup\{x^0\}$. We denote by
$M_{\mathrm{thr}}\leq Nd_x$ the total number of candidate thresholds so
obtained. A query region is an intersection of at most $D$ of the
corresponding half-spaces, so that
\begin{equation}
  |\mathcal S(x^0)|\ \leq\ \sum_{s=0}^{D}\binom{M_{\mathrm{thr}}}{s}\ =\ O\bigl((Nd_x)^{D}\bigr).
  \label{eq:counting}
\end{equation}
This gives a bound that is polynomial in $N$ for fixed depth. We denote by
$\Pi_D(x^0)$ the set of feasible paths of maximum depth $D$ and minimum leaf
size $N_{\min}$ built on this menu, so that the set of attainable
neighborhoods is exactly
\mbox{$\{ \mathcal N(R(\pi)) : \pi\in\Pi_D(x^0)\}$}.
Two trees that share the same query path may still differ within the sibling
regions, and the minimum loss \eqref{eq:neighborhood-loss} seeks the best
such tree. For an index set $A\subseteq[N]$ and an integer depth budget $h\geq0$, let $\Psi_h(A)$ denote the least contribution to the loss \eqref{eq:tree-loss} of a feasible subtree of depth at most $h$ receiving the observations indexed by $A$, that is
\begin{equation}
\Psi_h(A)
=
\min_{T'}
\left\{
    \frac{1}{N}
    \sum_{\ell\in\mathcal L(T')}
    \operatorname{SSE}\bigl(\mathcal N(\ell)\bigr)
    +\lambda|\mathcal I(T')|
\right\},
\label{eq:psi}
\end{equation}
with the convention $\min\varnothing:=+\infty$. The minimum is taken over axis-aligned subtrees using the split menu \eqref{eq:menu}, whose leaves partition $A$, receive at least $N_{\min}$ observations each, and have depth at most $h$. Thus, $\Psi_h(A)=+\infty$ whenever $|A|<N_{\min}$. If $|A|\geq N_{\min}$, the single-leaf subtree is feasible for every $h\geq0$, so $\Psi_h(A)$ is finite. We now show how the optimal loss can be decomposed. Once the query path is fixed, the sibling-subtree optimization problems are separable. The maximum depth and minimum leaf size restrictions can be enforced separately within each subtree, and the loss is additive across leaves and internal splits.

\begin{proposition}\label{prop:decomposition}
For the tree class $\Theta_N$ defined above, with maximum depth $D$, minimum leaf size $N_{\min}$, axis-aligned splits from \eqref{eq:menu}, and the additive loss \eqref{eq:tree-loss}, the least regularized loss attainable by a tree that routes $x^0$ along a path $\pi\in\Pi_D(x^0)$ is
\begin{equation}
  \rho_\pi(x^0)
  :=
  \min_{T\in\Theta_N:\ \pi_T=\pi} \rho(T;\mathcal D_N)
  =
  \underbrace{
    \vphantom{\displaystyle
      \sum_{j=1}^{|\pi|}
      \Psi_{D-j}\bigl(\mathcal N(Q_j(\pi))\bigr)}
    \frac{1}{N}\operatorname{SSE}\bigl(\mathcal N(R(\pi))\bigr)
  +
    \vphantom{\displaystyle
      \sum_{j=1}^{|\pi|}
      \Psi_{D-j}\bigl(\mathcal N(Q_j(\pi))\bigr)}
    \lambda|\pi|
  }_{\text{loss at query leaf}}
  +
  \underbrace{
    \sum_{j=1}^{|\pi|}
      \Psi_{D-j}\bigl(\mathcal N(Q_j(\pi))\bigr)
  }_{\text{loss at other leaves}},
  \label{eq:route-loss}
\end{equation}
and $\rho_S(x^0)=\min\{\rho_\pi(x^0):\pi\in\Pi_D(x^0),\ \mathcal N(R(\pi))=S\}, \, \forall S\in\mathcal S(x^0)$.
\end{proposition}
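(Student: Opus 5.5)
The plan is to prove the formula for $\rho_\pi(x^0)$ by matching a lower and an upper bound, using a bijective decomposition of trees whose query path is $\pi$. The identity for $\rho_S(x^0)$ then follows by grouping these trees according to their query path.

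\textbf{Decomposition of a tree with query path $\pi$.} Fix $\pi\in\Pi_D(x^0)$ and a tree $T\in\Theta_N$ with $\pi_T=\pi$. The $j$-th node on the query path has two children. One continues the path, and the other roots a subtree $T_j$ that receives exactly the points of $Q_j(\pi)$, since an observation reaches that child if and only if it satisfies $\mathcal H_l(\pi)$ for $l<j$ and violates $\mathcal H_j(\pi)$. That child sits at depth $j$, so $T_j$ has depth at most $D-j$. All leaves of $T_j$ are leaves of $T$, so each contains at least $N_{\min}$ observations, and its splits come from the menu \eqref{eq:menu}. The query leaf is $R(\pi)$ and contains $\mathcal N(R(\pi))$.

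\textbf{Lower bound.} Because the leaves of $T$ are the query leaf together with the leaves of $T_1,\dots,T_{|\pi|}$, and the internal nodes of $T$ are the $|\pi|$ path nodes together with the internal nodes of the $T_j$, the additive loss \eqref{eq:tree-loss} splits as
\[
\rho(T;\mathcal D_N)=\tfrac1N\operatorname{SSE}\bigl(\mathcal N(R(\pi))\bigr)+\lambda|\pi|+\sum_{j=1}^{|\pi|}\Bigl(\tfrac1N\textstyle\sum_{\ell\in\mathcal L(T_j)}\operatorname{SSE}(\mathcal N(\ell))+\lambda|\mathcal I(T_j)|\Bigr).
\]
Each bracketed term is at least $\Psi_{D-j}(\mathcal N(Q_j(\pi)))$ by \eqref{eq:psi}. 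This gives $\rho(T;\mathcal D_N)\ge$ the right-hand side of \eqref{eq:route-loss}.

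\textbf{Upper bound by construction.} Choose minimizers $T_j^\star$ in \eqref{eq:psi} for each $j$. These exist because there are finitely many menu subtrees, and the values are finite because $\pi$ is feasible, so $|\mathcal N(Q_j(\pi))|\ge N_{\min}$. I expect this feasibility point to be the main place needing care. Membership of $\pi$ in $\Pi_D(x^0)$ should be read as the existence of some tree in $\Theta_N$ with query path $\pi$; the tree $T$ from the decomposition step then provides a feasible witness subtree in every sibling region, so each $\Psi_{D-j}$ is finite.

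Graft $T_j^\star$ at the off-path child of the $j$-th path node. Since $T_j^\star$ is defined on $A=\mathcal N(Q_j(\pi))$, its splits apply to all of $Q_j(\pi)$; only the induced partition of the finite data set matters. The grafted tree has the following properties:
\begin{itemize}
\item its depth is at most $\max(|\pi|,\,j+(D-j))\le D$;
\item every leaf has at least $N_{\min}$ observations, the query leaf included, by feasibility of $\pi$;
\item it routes $x^0$ along $\pi$, since $x^0\in R(\pi)$.
\end{itemize}
By the same additive decomposition, its loss equals the right-hand side of \eqref{eq:route-loss}, so the minimum is attained.

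\textbf{The identity for $\rho_S$.} The set $\{T:S_T(x^0)=S\}$ is the disjoint union over paths $\pi$ with $\mathcal N(R(\pi))=S$ of $\{T:\pi_T=\pi\}$, using $S_T(x^0)=\mathcal N(R(\pi_T))$. By the menu argument, restricting thresholds to \eqref{eq:menu} changes neither losses nor neighborhoods, so every $\pi_T$ may be taken in $\Pi_D(x^0)$. Taking the minimum of $\rho(T;\mathcal D_N)$ over this union and then minimizing over $\pi$ yields $\rho_S(x^0)=\min_\pi\rho_\pi(x^0)$.
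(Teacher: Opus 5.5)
Your proposal is correct and follows the paper's proof step for step. The additive loss split over the query leaf, the $|\pi|$ path splits and the sibling subtrees gives the lower bound, grafting optimal $\Psi_{D-j}$ subtrees onto the off-path children attains it, and grouping trees by query path after the menu reduction yields the identity for $\rho_S(x^0)$. You read $\pi\in\Pi_D(x^0)$ as realizability by some tree in $\Theta_N$, whereas the paper reads it as the query region and every sibling region containing at least $N_{\min}$ observations; the two readings are equivalent, so the finiteness of each $\Psi_{D-j}$ follows either way.
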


\begin{proof}
Fix a feasible path $\pi\in\Pi_D(x^0)$ and let $T\in\Theta_N$
satisfy $\pi_T=\pi$. By the argument following \eqref{eq:menu},
we may restrict the thresholds in the sibling subtrees to the
menu while leaving $\pi$ unchanged, since this preserves the
training-data partition and the loss.
The query leaf of $T$ is $R(\pi)$, and the complement
$\mathcal X\setminus R(\pi)$ is partitioned through the subtrees
rooted in the sibling regions $Q_j(\pi)$,
$j=1,\ldots,|\pi|$. The $j$-th subtree is rooted at depth $j$
and therefore has depth budget $D-j$. These subtrees act on
pairwise disjoint index sets and are subject to no joint
constraint.
By \eqref{eq:tree-loss}, the loss decomposes into the query leaf
term, the penalty $\lambda|\pi|$ for the path splits, and
the sibling-subtree contributions. Each such contribution
consists of the squared errors of its leaves, normalized by
$N$, and the penalty for its own splits, and is therefore
at least $\Psi_{D-j}\bigl(\mathcal N(Q_j(\pi))\bigr)$.
Hence, every tree with query path $\pi$ has loss at least
the right-hand side of \eqref{eq:route-loss}.
Conversely, feasibility of $\pi$ ensures that the query leaf
and every sibling region contain at least $N_{\min}$
observations. Each sibling-subtree problem is therefore
feasible, since a single leaf is admissible, and its minimum
is attained because the split menu and depth bound permit
only finitely many tree structures. Attaching an optimal
subtree to each sibling region yields a tree of depth
at most $D$ satisfying the minimum leaf size requirement.
The query path remains $\pi$, since each attached subtree
lies on the side of its path split that does not contain
$x^0$ and is therefore reached only by points leaving the
path. The resulting tree attains the preceding lower bound,
proving \eqref{eq:route-loss}.
Finally, the menu reduction preserves both the loss and
the query neighborhood, so it suffices to consider trees
whose query paths belong to $\Pi_D(x^0)$. Since
$S_T(x^0)=\mathcal N(R(\pi_T))$, minimizing first over trees
sharing a query path and then over paths $\pi$ satisfying
$\mathcal N(R(\pi))=S$ gives the second claim.
\end{proof}

Proposition \ref{prop:decomposition} allows us to price a query path without
ever constructing the tree that realizes it, since the completion of a fixed
query path decomposes into at most $D$ independent subtree problems on disjoint
data. We remark that two paths that use the same splits in different orders
yield the same query region $R(\pi)$, but different sibling regions with
different depth budgets. The quantity $\rho_S(x^0)$ is therefore a minimum
over such orders.

\subsection{Bellman recursion}\label{sec:dp-values}
 
We compute the subtree values \eqref{eq:psi} by dynamic programming over index sets. Fix $A\subseteq[N]$ and an integer depth budget $h\geq0$. If $|A|<N_{\min}$, the state is infeasible and has value $+\infty$. Otherwise, a subtree receiving the observations indexed by $A$ may consist of a single leaf. When $h\geq1$, it may instead split $A$ using the menu \eqref{eq:menu}, provided that both children contain at least $N_{\min}$ observations. The child subtrees can then be optimized independently with depth budget $h-1$. Minimizing over these alternatives gives the Bellman recursion
\begin{equation}
\Psi_h(A) =
\begin{cases}
    +\infty, & |A| < N_{\min}, \\[2pt]
    \displaystyle\frac{1}{N}\operatorname{SSE}(A), & |A|\geq N_{\min},\ h = 0, \\[6pt]
    \displaystyle
    \min\Biggl\{
        \frac{1}{N}\operatorname{SSE}(A),\;
        \min_{(f,b)}
        \Bigl[
            \lambda
            + \Psi_{h-1}(A^-_{f,b})
            + \Psi_{h-1}(A^+_{f,b})
        \Bigr]
    \Biggr\},
    & \text{otherwise,}
\end{cases}
\label{eq:dp}
\end{equation}
where $A^-_{f,b} = \{i\in A:x_{if}\le b\}$ and
$A^+_{f,b} = A\setminus A^-_{f,b}$. The inner minimum runs over the
menu splits for which both children retain at least $N_{\min}$
observations and is $+\infty$ when none exists. All leaf contributions
are normalized by the global sample size $N$.
Values and minimizers are memoized on $(A,h)$ and reused across subsequent
calls for the same instance. In particular,
$\rho^\star=\Psi_D([N])$, and the stored minimizers reconstruct
the nominal tree $T^0$ of \eqref{eq:lexicographic}. This also gives $S_1=S_{T^0}(x^0)$ and
$e_{S_1}(x^0)=0$.
 
\subsection{Minimum-loss separation}\label{sec:dp-sep}
 
Let $\mathcal C\subseteq\mathcal S(x^0)$ denote the set of
discovered neighborhoods. Given an incumbent $(\hat z,\hat r)$
and a target $k$, define the observation-wise residuals
\(
    w_{ik}:=g_k(\hat z,y_i)-\tau_k,
    \,i\in[N].
\)
The separation problem seeks an undiscovered neighborhood
of minimum compatible tree loss that violates target $k$
and enters at or below $\hat r$, that is
\begin{subequations}\label{eq:sep}
\begin{align}
  (\operatorname{SEP}_k(\hat z,\hat r;\mathcal C))\qquad
  \min_{\pi}\quad & \rho_\pi(x^0) \\
  \textnormal{s.t.}\quad
  & \sum_{i\in\mathcal N(R(\pi))} w_{ik}>0,
  \label{eq:sep-viol}\\
  & \rho_\pi(x^0) \leq \rho^\star+\hat r, \label{eq:sep-cap}\\
  & \mathcal N(R(\pi))\notin\mathcal C, \label{eq:sep-excl}\\
  & \pi\in\Pi_D(x^0).
\end{align}
\end{subequations}
If $\max_{i\in[N]}w_{ik}\leq0$, then no neighborhood violates
target $k$ and \eqref{eq:sep} is infeasible.
We solve \eqref{eq:sep} by depth-first search over partial paths. A partial
path is represented by the index set $A$ of the observations still routed
with $x^0$, the remaining depth $h$, and the cost $c$ already committed by
the splits taken so far and their sibling subtrees. The search starts from
$A=[N]$, $h=D$ and $c=0$. An admissible split retains the child $A'$
containing $x^0$ and forms the sibling set $B=A\setminus A'$, with both
children containing at least $N_{\min}$ observations. It moves to $A'$
with remaining depth $h-1$ and cost $c+\lambda+\Psi_{h-1}(B)$, which adds
to $c$ the penalty of the split and the optimal completion of the sibling
set. Stopping at the current node instead makes $A$ the query leaf and
completes the path at price
$c+\frac{1}{N}\operatorname{SSE}(A)$, which is \eqref{eq:route-loss}.
The search is based on the following pruning rules:
\begin{itemize}
    \item Every completion of a partial path extends the current node into
    a subtree of depth at most $h$ receiving $A$, so by
    \eqref{eq:route-loss} its price is $c$ plus the contribution of that
    subtree, and therefore at least $c+\Psi_h(A)$. This bound prunes any
    partial path for which it exceeds the loss cap $\rho^\star+\hat r$ or
    the price of the best feasible candidate found so far.
    \item Within a separation call, the completions available at a node
    depend only on $A$ and $h$. Each such completion adds the same amount
    to the price regardless of the path used to reach that state.
    A partial path that reaches the same $A$ and $h$ with an equal or greater committed cost is therefore dominated, and the search keeps only the
    cheapest cost for each pair $(A,h)$.
\end{itemize}
A completed path is a candidate for \eqref{eq:sep} when its neighborhood
violates the target, has not been discovered before, and respects the loss
cap. The finite search terminates with a global minimizer of
\eqref{eq:sep} or with a proof that no feasible path exists.
If $\hat\pi$ is a global minimizer and
$\widehat S=\mathcal N(R(\hat\pi))$, then
$\rho_{\hat\pi}(x^0)=\rho_{\widehat S}(x^0)$. Indeed, a cheaper path
inducing the same neighborhood would preserve both the target violation
and the exclusion condition, would respect the loss cap, and would
contradict optimality of $\hat\pi$. Consequently, separation returns the
exact entry level $e_{\widehat S}(x^0)=\rho_{\hat\pi}(x^0)-\rho^\star$. %Constraints \eqref{eq:sep-cap} and \eqref{eq:sep-viol} are imposed up to floating-point precision. 

\subsection{Constraint generation algorithm}\label{sec:dp-cg}
 
We initialize $\mathcal C=\{S_1\}$ and alternate two steps. The
restricted master applies the monotone scan of Proposition
\ref{prop:closed-form} to the entry levels of $\mathcal C$, adding
all cuts at a common level together. The master returns the decision
$\hat z$ minimizing $H_0(z,S_1)$ on the last nonempty restricted feasible
set, using the tie-break of \eqref{eq:lexicographic}.
When the scan first becomes infeasible at a discovered level,~$\hat r$ is
that level and is an excluded breakdown endpoint. Otherwise,
$\hat r=\bar r$ and the candidate decision satisfies every discovered
constraint at the cap. In both cases, $\hat r$ is an upper bound on the
optimal radius of the full problem.
Separation is then performed for each target until a new violating
neighborhood is found, which is added to $\mathcal C$. Since separation imposes
$\rho_\pi(x^0)\le\rho^\star+\hat r$ and $\hat r\le\bar r$, every added
neighborhood enters at or below the cap. The loss cap in separation is
inclusive, as violations at $\bar r$ must be ruled out before
declaring that the cap is attained.
Algorithm \ref{alg:dp} summarizes the procedure. 
It performs at most $m-1$ additions to $\mathcal C$,
at most $m$ restricted-master scans and at most $(K+1)m$ separation calls, where $m=|\mathcal S(x^0)|$ so that
$m=O\bigl((Nd_x)^D\bigr)$ is polynomial for fixed $D$ by \eqref{eq:counting}.
 
\begin{algorithm}[htb]
\caption{Constraint generation for ODT-JRRO}\label{alg:dp}
\footnotesize
\begin{algorithmic}[1]
\Require $\mathcal D_N$, $x^0$, $\Theta_N$, $\mathcal Z$,
         targets $(g_k,\tau_k)_{k=0}^{K}$, cap $\bar r$ from \eqref{eq:radius-cap}
\Ensure $(z^\star,r^\star,\texttt{attained})$ or \textsc{Infeasible}
\State $\rho^\star\gets\Psi_D([N])$, fix $T^0$ and $S_1 = S_{T^0}(x^0)$
\State $\mathcal C\gets\{S_1\}$
\State $e_{S_1}(x^0) \gets 0$
\Loop
  \State let $0=\hat\eta_1<\cdots<\hat\eta_P\leq\bar r$ be the entry levels
         present in $\mathcal C$, and let $\widehat{\mathcal F}_p$ be the set
         \eqref{eq:prefix} restricted to the neighborhoods of $\mathcal C$
         with entry level at most $\hat\eta_p$
  \State $p^\star\gets\max\bigl\{p\in[P]:\widehat{\mathcal F}_p\neq\varnothing\bigr\}$,
         and $p^\star\gets0$ if no such $p$ exists
  \If{$p^\star=0$}
    \State \Return \textsc{Infeasible}
  \EndIf
  \State choose $\hat z\in\arg\min_{z\in\widehat{\mathcal F}_{p^\star}}
         H_0(z,S_1)$
  \If{$p^\star<P$}
    \State $\hat r\gets\hat\eta_{p^\star+1}$ and
           $\texttt{attained}\gets\texttt{false}$
  \Else
    \State $\hat r\gets\bar r$ and
           $\texttt{attained}\gets\texttt{true}$
  \EndIf
  \State $\texttt{found}\gets\texttt{false}$
  \For{$k=0,\ldots,K$}
    \State $w_{ik}\gets g_k(\hat z,y_i)-\tau_k$ for every $i\in[N]$
    \If{$\max_{i\in[N]}w_{ik}\leq0$}
      \State \textbf{continue}
    \EndIf
    \State solve $\operatorname{SEP}_k(\hat z,\hat r;\mathcal C)$
    \If{an optimal path $\hat\pi$ is returned}
      \State $\widehat S\gets\mathcal N(R(\hat\pi))$
      \State $e_{\widehat S}(x^0)\gets\rho_{\hat\pi}(x^0)-\rho^\star$
        \State $\mathcal C\gets\mathcal C\cup\{\widehat S\}$
      \State $\texttt{found}\gets\texttt{true}$
      \State \textbf{break}
    \EndIf
  \EndFor
  \If{\textbf{not} $\texttt{found}$}
    \State $(z^\star,r^\star)\gets(\hat z,\hat r)$
  \State \Return $(z^\star,r^\star,\texttt{attained})$
  \EndIf
\EndLoop
\end{algorithmic}
\end{algorithm}

\section{Computational evaluation}\label{sec:computational}
In this section, we conduct a computational evaluation of the proposed method for solving ODT-JRRO. We use the contextual newsvendor problem as the downstream problem. The decision is an order quantity $q\in\mathcal Z=[0,\bar q]$,
the uncertain outcome $Y$ is demand, and the cost of ordering $q$ against a realized demand $y$ is
\begin{equation}
    c(q,y)
    =c_{\mathrm{u}}(y-q)^+
     +c_{\mathrm{o}}(q-y)^+,
    \label{eq:nv-cost}
\end{equation}
with underage cost $c_{\mathrm{u}}>0$ and overage cost $c_{\mathrm{o}}>0$. There is a
single target, obtained by setting $g_0\equiv c$ and $\tau_0\equiv\tau$.
By \eqref{eq:local-empirical-target}, a tree $T$ induces at the query
point the average cost of $q$ over the historical demands sharing the
query leaf, and the decision is acceptable for $T$ when
$G_0(q,T;x^0)\le\tau$. When feasible, ODT-JRRO returns a decision $q^\star$ and the supremum
$r^\star$ of the feasible radii. We take $c_{\mathrm{u}}=3$, $c_{\mathrm{o}}=1$ and
$\bar q=\max\{0,\max_i y_i+8\}$, unless otherwise specified.
The function $c(\cdot,y)$ is convex and piecewise linear, so each robust
cut \eqref{eq:tree-cut} describes a possibly empty interval of
$[0,\bar q]$, and every feasible set \eqref{eq:prefix} is an
intersection of such intervals. The monotone scan of
Proposition~\ref{prop:closed-form} can therefore be carried out by
updating interval endpoints, without solving an auxiliary optimization
problem. We use the tie-breaking rule \eqref{eq:lexicographic} and the radius cap \eqref{eq:radius-cap}. ODT-JRRO is solved using absolute and relative tolerances of $10^{-10}$ each. All experiments were performed on a workstation equipped with an Intel Core Ultra 7 265 processor and 32 GB of RAM, running Windows 11 Enterprise. The implementation uses Python 3.13, NumPy 2.4.6, and Numba 0.65.1. 

\subsection{Evaluation on synthetic data}\label{sec:e1}
We evaluate the computational performance of Algorithm~\ref{alg:dp} on
synthetic instances. 
We generate Gaussian demands with
$Y\mid X=x\sim\mathcal N(\mu(x),\sigma^2)$ and
$\sigma\in\{1,2,5\}$, so that the population cost of any order quantity is available in closed form. Writing
$z'=(q-\mu(x))/\sigma$ and letting $\phi,\Phi$ denote the standard normal
density and distribution function, respectively, we have
\begin{equation}
  \mathbb E\bigl[c(q,Y)\mid X=x\bigr]
  \;=\;\sigma\Bigl\{c_{\mathrm{o}}\bigl(\phi(z')+z'\,\Phi(z')\bigr)
      +c_{\mathrm{u}}\bigl(\phi(z')-z'\,(1-\Phi(z'))\bigr)\Bigr\}.
  \label{eq:nv-true-cost}
\end{equation}
The unconstrained minimizer is the conditional quantile
$q^{\mathrm{orc}}(x)=\mu(x)+\sigma\,
\Phi^{-1}\!\bigl(c_{\mathrm{u}}/(c_{\mathrm{o}}+c_{\mathrm{u}})\bigr)$, at which
\eqref{eq:nv-true-cost} reduces to
\begin{equation}
  c^{\mathrm{orc}}(\sigma)
  =\sigma(c_{\mathrm{o}}+c_{\mathrm{u}})\,\phi\!\left(\Phi^{-1}\!\left(\frac{c_{\mathrm{u}}}{c_{\mathrm{o}}+c_{\mathrm{u}}}\right)\right).
  \label{eq:nv-oracle-cost}
\end{equation}
Under homoscedastic noise, this value does not depend on $x$. With the
chosen costs, $c^{\mathrm{orc}}(\sigma)\simeq1.2711\sigma$.
If the oracle order
falls outside $[0,\bar q]$, the constrained minimizer is its projection
onto that interval.
Covariates are drawn uniformly on $[0,1]^{d_x}$ and rounded to four
decimal places. For $d_x\in\{2,4\}$, mean demand takes values $12,18,25,33$
on the lower-left, upper-left, lower-right and upper-right quadrants
of the first two coordinates, split at $\tfrac12$. The additional
covariates are irrelevant but available for splitting. For $d_x=1$,
mean demand is $12$ for $x_1\le\tfrac12$ and $25$ otherwise.
These mean functions can be represented by trees of depth at most two before the sample-dependent split menu and minimum leaf size are imposed.
To avoid the boundary of the covariate support, we draw the query point uniformly from $[0.2,0.8]^{d_x}$ and round its coordinates to four decimal places.
We set the target
$\tau=(1+\gamma)c^{\mathrm{orc}}(\sigma)$, where $c^{\mathrm{orc}}(\sigma)$ is
the optimal cost under the true data-generating model and $\gamma\geq0$ is the
relative slack allowed above it. The oracle cost is computed directly from the
model, so the target is exogenous to the data used to fit the trees.
We perform ten replications for each combination of sample size
$N\in\{50,100,150,200\}$, noise level $\sigma\in\{1,2,5\}$,
target slack $\gamma\in\{0.10,0.25,0.50,0.75,1\}$, maximum depth
$D\in\{2,3,4\}$, minimum leaf size $N_{\min}\in\{1,2,5,10\}$,
split penalty $\lambda\in\{0,0.1,0.25,0.5,0.75,1\}$, and number
of features $d_x\in\{1,2,4\}$, for a total of $129600$ runs.
Each run is subject to a limit of $10^6$ dynamic programming cache
entries, a wall-clock limit of $12000$ seconds, and a limit of $2000$
constraint generation iterations.
Table~\ref{tab:comp-scaling} reports the percentage of runs solved and the median
and 90th percentile wall-clock times by $D$, $N$ and $d_x$. Table~\ref{tab:comp-profile} reports the computational profile by depth and target slack, together with radius statistics computed over solved runs only.

\begin{table}[htb]
\centering
\footnotesize
\setlength{\tabcolsep}{1pt}
\caption{Computational performance of Algorithm~1 by maximum depth $D$, sample size $N$ and number of features $d_x$. Each cell aggregates 3600 runs. \emph{Solved} is the percentage of runs that returned a proven outcome up to numerical tolerance (certified radius, attained cap, or infeasibility at $r=0$). All remaining runs stopped at the limit of $10^6$ cache entries. We report the median and 90th percentile wall-clock times in seconds over solved runs.}
\label{tab:comp-scaling}
\begin{tabular*}{\textwidth}{@{\extracolsep{\fill}}crrrrrrrrrr}
\toprule
 & & \multicolumn{3}{c}{$d_x=1$} & \multicolumn{3}{c}{$d_x=2$} & \multicolumn{3}{c}{$d_x=4$} \\
\cmidrule(lr){3-5} \cmidrule(lr){6-8} \cmidrule(lr){9-11}
$D$ & $N$ & Solved & Median (s) & 90th (s) & Solved & Median (s) & 90th (s) & Solved & Median (s) & 90th (s) \\
\midrule
\multirow{4}{*}{2} & 50 & 100 & $<$0.01 & 0.02 & 100 & $<$0.01 & 0.03 & 100 & 0.02 & 0.07 \\
 & 100 & 100 & 0.08 & 0.18 & 100 & 0.02 & 0.23 & 100 & 0.05 & 0.51 \\
 & 150 & 100 & 0.19 & 0.49 & 100 & 0.04 & 0.57 & 100 & 0.08 & 1.21 \\
 & 200 & 100 & 0.38 & 1.33 & 100 & 0.06 & 1.26 & 100 & 0.12 & 2.62 \\
\midrule
\multirow{4}{*}{3} & 50 & 100 & 0.04 & 0.12 & 100 & 0.10 & 0.20 & 100 & 0.42 & 0.78 \\
 & 100 & 100 & 0.61 & 2.30 & 100 & 0.59 & 1.45 & 100 & 2.73 & 4.86 \\
 & 150 & 100 & 2.14 & 7.71 & 100 & 1.57 & 4.23 & 95.0 & 7.00 & 10.4 \\
 & 200 & 100 & 5.28 & 23.0 & 99.1 & 3.25 & 8.35 & 80.1 & 13.2 & 16.2 \\
\midrule
\multirow{4}{*}{4} & 50 & 100 & 0.08 & 0.33 & 100 & 0.83 & 1.29 & 100 & 7.94 & 13.2 \\
 & 100 & 100 & 1.52 & 6.01 & 100 & 10.1 & 13.7 & 0.4 & 13.8 & 16.8\\
 & 150 & 100 & 6.21 & 24.6 & 23.3 & 20.6 & 26.3 & 0 & -- & -- \\
 & 200 & 100 & 15.6 & 69.5 & 0 & -- & -- & 0 & -- & -- \\
\bottomrule
\end{tabular*}
\end{table}

\begin{table}[htb]
\centering
\footnotesize
\setlength{\tabcolsep}{1pt}
\caption{Computational profile of Algorithm~\ref{alg:dp}
by maximum depth $D$ and target slack $\gamma$, with
8640 runs per row. \emph{Solved} is the percentage of runs
returning a proven outcome up to numerical tolerance. \emph{Stop in $\rho^\star$}
is the percentage of unsolved runs that reached the cache
limit while computing the nominal optimum $\rho^\star$.
\emph{Outcome} splits solved runs into infeasibility at
$r=0$ (Inf.), an excluded breakdown threshold (Brk.),
and an attained cap $\bar r$ (Cap). The median normalized
value $r^\star/\rho^\star$ and the percentage of decisions
differing from the nominal decision $z^0$ are computed over solved Brk.\ and
Cap outcomes only. \emph{Separations} reports the median
and maximum number of calls to the separation problem,
and \emph{States} reports the median number of memoized
subtree values $\Psi_h(A)$, in thousands. Both are computed
over all solved runs, including infeasible outcomes.
\emph{Time share} splits the solved runtime into evaluation
of the dynamic program (DP), path search excluding DP
(Path), and the master problem (Master). The remainder is fixed overhead such as model setup and result assembly.}
\label{tab:comp-profile}
\begin{tabular*}{\textwidth}{@{\extracolsep{\fill}}ccrrrrrrrrrrrrr}
\toprule
 & & \multicolumn{2}{c}{Runs (\%)} & \multicolumn{3}{c}{Outcome (\%)} & Median & $z^\star\neq z^0$ & \multicolumn{2}{c}{Separations} & States & \multicolumn{3}{c}{Time share (\%)} \\
\cmidrule(lr){3-4} \cmidrule(lr){5-7} \cmidrule(lr){10-11} \cmidrule(lr){13-15}
$D$ & $\gamma$ & Solved & Stop in $\rho^\star$ & Inf. & Brk. & Cap & $r^\star/\rho^\star$ & (\%) & Med. & Max & ($\times 10^3$) & DP & Path & Master \\
\midrule
\multirow{5}{*}{2} & 0.10 & 100 & -- & 20.4 & 79.6 & 0 & 0.17 & 22.3 & 2 & 9 & 1.6 & 67.1 & 29.0 & 2.0 \\
 & 0.25 & 100 & -- & 10.7 & 89.3 & 0 & 0.23 & 32.0 & 2 & 13 & 2.1 & 59.8 & 37.3 & 1.4 \\
 & 0.50 & 100 & -- & 9.2 & 90.7 & $<$0.1 & 0.36 & 35.4 & 2 & 21 & 2.8 & 55.4 & 43.3 & 0.7 \\
 & 0.75 & 100 & -- & 8.8 & 89.2 & 2.0 & 0.50 & 42.2 & 2 & 33 & 3.2 & 52.9 & 45.6 & 0.7 \\
 & 1.00 & 100 & -- & 8.1 & 85.1 & 6.8 & 0.65 & 40.7 & 2 & 49 & 3.7 & 53.8 & 44.8 & 0.6 \\
\midrule
\multirow{5}{*}{3} & 0.10 & 100 & -- & 18.8 & 81.2 & 0 & 0.09 & 29.7 & 2 & 17 & 21 & 93.4 & 6.5 & $<$0.1 \\
 & 0.25 & 99.5 & 0 & 11.1 & 88.9 & 0 & 0.12 & 43.5 & 2 & 15 & 23 & 85.4 & 14.0 & $<$0.1 \\
 & 0.50 & 98.3 & 0 & 10.2 & 89.7 & $<$0.1 & 0.18 & 43.5 & 2 & 23 & 26 & 74.8 & 24.8 & $<$0.1 \\
 & 0.75 & 96.5 & 0 & 9.4 & 88.5 & 2.1 & 0.26 & 46.6 & 2 & 39 & 27 & 63.9 & 35.7 & $<$0.1 \\
 & 1.00 & 94.8 & 0 & 8.9 & 84.2 & 7.0 & 0.33 & 44.6 & 2 & 66 & 26 & 62.3 & 37.3 & $<$0.1 \\
\midrule
\multirow{5}{*}{4} & 0.10 & 60.6 & 98.3 & 21.3 & 78.7 & 0 & 0.14 & 33.5 & 2 & 15 & 26 & 88.8 & 11.2 & $<$0.1 \\
 & 0.25 & 60.4 & 97.8 & 14.6 & 85.4 & 0 & 0.20 & 43.2 & 2 & 17 & 27 & 78.1 & 21.7 & $<$0.1 \\
 & 0.50 & 60.3 & 97.6 & 12.7 & 87.2 & 0.1 & 0.33 & 41.9 & 2 & 26 & 27 & 65.5 & 34.4 & $<$0.1 \\
 & 0.75 & 60.2 & 97.2 & 11.1 & 85.5 & 3.3 & 0.55 & 49.7 & 2 & 44 & 30 & 54.7 & 45.2 & $<$0.1 \\
 & 1.00 & 60.1 & 97.0 & 10.2 & 78.8 & 11.0 & 0.82 & 47.7 & 2 & 69 & 31 & 55.1 & 44.8 & $<$0.1 \\
\bottomrule
\end{tabular*}
\end{table}

The results show that relaxing the target does not necessarily make
ODT-JRRO easier to solve. Indeed, achieving a larger radius requires
certifying robustness over a wider family of near-optimal trees,
increasing the burden on separation. However, at larger depths,
computing the nominal optimum becomes the dominant obstacle, so
improvements to constraint generation alone would have limited impact on
scalability. The small median number of separation calls indicates that few new neighborhoods
are typically needed before termination.
Together with the runtime decomposition, this suggests that most of the computational effort is spent computing subtree values and pricing query paths. The share of runtime devoted to path search generally increases with the target slack $\gamma$, consistent with the greater separation effort needed to certify larger robustness radii. Moreover, joint radius optimization frequently changes the selected decision relative to the nominal prescription. The frequency of these changes is generally higher at larger target slack values.

\begin{table}[htb]
\centering
\footnotesize
\setlength{\tabcolsep}{6pt}
\renewcommand{\arraystretch}{1.15}
\caption{Computational comparison with complete-tree enumeration,
full query-neighborhood enumeration, and the pruning ablation.
Entries are means over five training seeds and ten
queries per seed, with depth $D=2$, $d_x=4$, $N_{\min}=5$, and
$\lambda=0.1$, on instances generated with $\sigma = 2$ and solved with $\gamma = 1$. The upper block reports cached
dynamic programming (DP) states, while the tree enumeration reports distinct
recursive enumeration subproblems.
All reported runs were solved, and all paired comparisons with Algorithm \ref{alg:dp} agreed
with absolute and relative comparison tolerances of $10^{-7}$.}
\label{tab:enumeration-comparison}
\begin{tabular}{@{}rlrrr@{}}
\toprule
$N$ & Method & \shortstack{Total\\(s)} & \shortstack{Generated\\neighborhoods}
    & \shortstack{Distinct DP\\subtree states} \\
\midrule
50  & Constraint generation (Alg. \ref{alg:dp})            & 0.0130 &    2.88 &     925 \\
    & Constraint generation without pruning  & 0.0841 &    2.88 &   7819 \\
    & Full neighborhood enumeration & 0.0963 &   6215 &   6446 \\
\midrule
100 & Constraint generation (Alg. \ref{alg:dp})                    & 0.0359 &    2.96 &   2962 \\
    & Constraint generation without pruning  & 0.5674 &    2.96 &  42482 \\
    & Full neighborhood enumeration & 0.7784 &  31573 &  30939 \\
\midrule
200 & Constraint generation (Alg. \ref{alg:dp})                 & 0.1204 &    3.00 &  10453 \\
    & Constraint generation without pruning  & 2.5149 &    3.00 & 192133 \\
    & Full neighborhood enumeration & 4.6969 & 141351 & 130897 \\
\midrule
\addlinespace[0.5em]
 &  &  & \shortstack{Generated\\trees}
    & \shortstack{Distinct recursive\\subtree states} \\
\midrule
50 & Tree enumeration & 18.7324 & 1235809 & 12818 \\
\bottomrule
\end{tabular}
\end{table}

To assess computational efficiency, we compare Algorithm \ref{alg:dp} with full neighborhood enumeration and, for the smallest sample size, explicit tree enumeration, using matched training samples and queries. An additional ablation disables pruning to isolate its contribution. All methods are charged for the common nominal dynamic programming preprocessing, and each query is solved without reusing state from other queries. Table~\ref{tab:enumeration-comparison} shows agreement with the enumeration references within numerical tolerances. The runtime advantage of Algorithm \ref{alg:dp} over full neighborhood enumeration increases with sample size, while its generated neighborhood set remains small. The pruning ablation requires substantially more subtree states despite generating the same neighborhoods, indicating that completion-cost bounds reduce the effort needed to discover and certify them. Finally, the tree comparison illustrates how many distinct trees induce the same query neighborhood, supporting the neighborhood representation and its combination with selective constraint generation.

\subsection{Comparison against other methods on real-world data}
We benchmark ODT-JRRO on chicken demand from the YAZ dataset
provided by the Python package \textit{ddop}
\citep{philippi2021ddop}. It contains 765 records of demand for
seven ingredients at a restaurant in Stuttgart.
We encode weekday and month using one-hot indicators and retain
year, holiday and closure indicators, and the five weather
variables, giving 27 features. Date and weekend are excluded.
The weather covariates are realized same-day values recorded in the dataset, which we treat as known when constructing the contextual orders. The experiment therefore compares prescriptions conditional on observed weather.
We split the observations chronologically into 459 training,
153 calibration, and 153 test observations. The test dates run
from 8 June to 7 November 2015, and the training sample remains
fixed throughout the experiment. Demand is normalized
by its training mean, and the order capacity is twice the
maximum training demand, giving $\bar q=156$ in original units.
With $\gamma=0$, the target $\tau\approx18.0719$ is the mean
calibration cost of the order obtained by sample average approximation over the full training sample.

We compare our approach with five alternatives.
Two baselines use the partition returned by nominal tree
optimization. ODT-SAA minimizes the empirical average cost
over the observations in the query leaf, whereas ODT-mean
replaces the within-leaf empirical distribution by a point
mass at the leaf mean. This comparison holds the nominal
partition fixed and isolates the decision rule.
We select the maximum depth, minimum leaf size and split penalty from the grid
$D \in \{2,3\}$, $N_{\min} \in \{1,2,5,10,15,20\}$ and
$\lambda \in \{0, 10^{-4}, 10^{-3}, 10^{-2}, 10^{-1}, 1\}$, fitting on the training sample and
choosing the combination that minimizes the mean calibration cost of the ODT-SAA orders,
without using the test period. This gives $D=3$, $N_{\min}=20$ and $\lambda=0.01$, which are
shared by all tree-based methods.
The k-nearest neighbors (kNN-SAA) method selects its neighborhood size by minimizing
mean calibration cost, giving $k=5$. JERO-OLS applies JERO
to squared-error loss sublevel sets of a linear predictor fitted by ordinary least squares (OLS).
Both methods use features standardized with training-sample
statistics. For JERO-OLS, Sunday and December serve as reference levels, so that the design matrix has full column rank. Global-SAA minimizes empirical average cost
over the entire training sample. We use $\mathrm{CVaR}_{0.1}$ to denote the empirical mean of the
worst 10\% of realized costs, including fractional probability mass
at the tail boundary. We call a day a stockout day if the order is below realized demand.

\begin{table}[htb]
\centering
\caption{Held-out comparison on chicken demand from the YAZ dataset over the
153 consecutive days of the held-out period, with $D=3$, $N_{\min}=20$,
$\lambda=0.01$ and $\gamma=0$ ($\tau\approx18.07$).
Differences are computed as method minus ODT-JRRO, so positive values
favor ODT-JRRO. The pointwise $95\%$ intervals are percentile intervals from a paired
moving-block bootstrap with blocks of 7 consecutive days and
50000 resamples. Operational success is the percentage of days
on which a method issues an order with realized cost at most $\tau$.
All methods issue orders on every test date.}
\label{tab:ddop}
\footnotesize
\setlength{\tabcolsep}{6pt}
\begin{tabular}{@{}lrrrrrrr@{}}
\toprule
 & & \multicolumn{4}{c}{Realized cost} & & \\
\cmidrule(lr){3-6}
Method
& \shortstack{Mean\\order}
& Mean (SD)
& \shortstack{Mean\\difference:\\95\% CI}
& $\mathrm{CVaR}_{0.1}$
& \shortstack{$\mathrm{CVaR}_{0.1}$\\difference:\\95\% CI}
& \shortstack{Stockout\\days (\%)}
& \shortstack{Operational\\success (\%)} \\
\midrule
\multicolumn{8}{@{}l}{\emph{Methods returning a certificate}} \\
ODT-JRRO
& 36.9 & 13.8 (17.4) & --
& 51.3 & -- & 21.6 & 81.0 \\
JERO-OLS
& 28.5 & 18.2 (23.9) & $[2.2,7.4]$
& 73.2 & $[13.7,29.8]$ & 63.4 & 67.3 \\
\addlinespace
\multicolumn{8}{@{}l}{\emph{Methods without a certificate}} \\
ODT-SAA
& 34.9 & 13.9 (19.4) & $[-0.5,0.9]$
& 55.9 & $[1.8,8.0]$ & 28.8 & 81.0 \\
ODT-mean
& 29.4 & 17.9 (23.6) & $[1.6,7.0]$
& 72.1 & $[14.0,24.3]$ & 59.5 & 67.3 \\
kNN-SAA
& 31.9 & 15.9 (22.9) & $[0.1,4.8]$
& 68.5 & $[9.3,26.0]$ & 39.9 & 77.8 \\
Global-SAA
& 36.0 & 15.1 (20.5) & $[-0.1,2.9]$
& 61.5 & $[3.3,20.8]$ & 24.2 & 81.0 \\
\bottomrule
\end{tabular}
\end{table}

Table~\ref{tab:ddop} shows that ODT-JRRO achieves the lowest observed
mean cost and $\mathrm{CVaR}_{0.1}$ among the methods considered.
Relative to ODT-SAA, it reduces mean cost by $1.17\%$ and
$\mathrm{CVaR}_{0.1}$ by $8.27\%$, computed from unrounded values.
The mean-cost difference has a confidence interval containing zero,
whereas the interval for the CVaR difference lies strictly above zero.
The evidence of improved performance is therefore clearer for
upper-tail cost than for average cost.
The higher mean order is also accompanied by a $25\%$ reduction
in the number of stockout days.
Operational success is 81.05\% for both methods, so the lower tail cost does not come at the expense of meeting the target less often.
The principal benefit relative to ODT-SAA is greater empirical
robustness to alternative trees. When both methods' orders are evaluated under the same admissible tree class, target, and radius cap,
the mean breakdown threshold increases from $0.018250$ for ODT-SAA
to $0.021453$ for ODT-JRRO, in normalized squared-demand units.
This represents a gain of $17.55\%$ in the mean tolerated excess
estimation loss. The ODT-SAA orders are held fixed when computing
their breakdown thresholds, so this comparison directly measures
the robustness gained by changing the decision.
The threshold strictly increases on $71.24\%$ of test queries
and is unchanged on the others. Against the remaining methods, ODT-JRRO achieves significantly lower mean and tail cost than JERO-OLS, ODT-mean and kNN-SAA, and significantly lower tail cost than Global-SAA. JERO-OLS, the only other method that returns a certificate, has a mean realized cost of 18.2, above the target. Because it robustifies the conditional-mean prediction without accounting for residual dispersion, it places low orders and stocks out on 63.4\% of days.

\subsection{Interpretable outcome on real-world data}
We illustrate how the proposed method explains a local decision, using the setting of
Section~6.2 with the first held-out date, 8 June 2015, as the query. Its observed demand is used
neither to construct the decision nor to set the target. Order quantities and costs are reported
in original units, while tree losses and loss tolerances use demand normalized by its training
mean.
The resulting order is $q^\star\approx36.225$, compared with the nominal-tree order of $32$, an increase of approximately $13.2\%$. The cost target is $\tau\approx18.0719$. Algorithm~\ref{alg:dp} retains five neighborhoods, denoted $S_1,\ldots,S_5$, and identifies the breakdown threshold $r^\star\approx0.019232$. The returned order satisfies the target for every radius strictly below $r^\star$, while no order is feasible at $r^\star$.

Figure~\ref{fig:interpret-evolution} shows how the admissible neighborhood family expands and the feasible-order interval contracts as the radius increases. For this illustration, the full family is obtained by complete enumeration through the breakdown threshold. The upper panel reports $|\mathcal S(x^0,r)|$, that is, the number of distinct neighborhoods, for a total of $26940$ neighborhoods including $S_5$. The lower panel shows the corresponding feasible-order intervals. As the radius increases, newly admitted neighborhoods may raise the lower bound or reduce the upper bound, while others leave both unchanged. Immediately before breakdown, $q^\star$ lies at the lower endpoint of the remaining interval. At $r^\star$, the admission of $S_5$ makes the interval empty.
\begin{figure}[tb]
    \centering
    \includegraphics[width=0.8\linewidth]
    {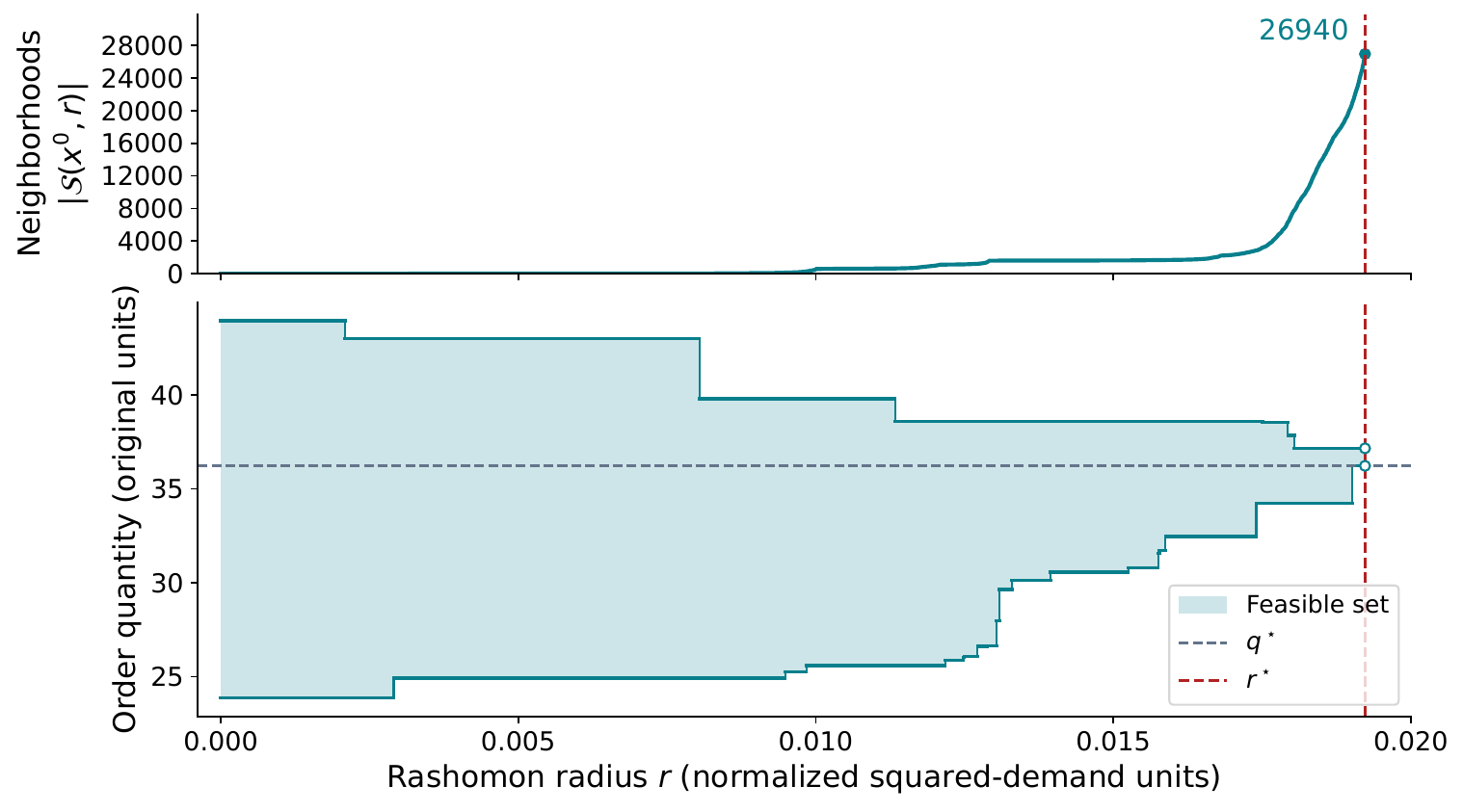}
    \caption{Expansion of the admissible neighborhood family (upper panel) and contraction of the feasible-order interval (lower panel). The full family contains $26939$ neighborhoods strictly below the breakdown threshold and $26940$ at $r^\star$, marked by the vertical dashed line. The shaded interval contains orders satisfying all admitted constraints, and the horizontal dashed line marks $q^\star$. The feasible set is empty at $r^\star$.}
    \label{fig:interpret-evolution}
\end{figure}

Figure~\ref{fig:interpret-membership} shows the training observations belonging to each retained neighborhood, together with its size, entry level, and feasible-order interval. These neighborhoods explain both the change in the order and the limit of robustness. The nominal neighborhood $S_1$ supports the order of $32$, while $S_4$ requires an order of approximately $36.22$ or more, explaining the increase to $q^\star$. The neighborhood $S_5$ has a different role: its feasible-order interval is empty, so no order can satisfy the target once its entry level is reached.
At the displayed precision, the constraints from $S_1,\ldots,S_4$ alone leave the interval $[36.22,42.44]$. Additional neighborhoods in the full family lower the upper endpoint further before breakdown. This distinction reflects the role of constraint generation: separation certifies the returned decision against the full admissible family, while the retained constraints need not reproduce every boundary of the full feasible set.
\begin{figure}[tb]
    \centering
    \includegraphics[width=\linewidth]
    {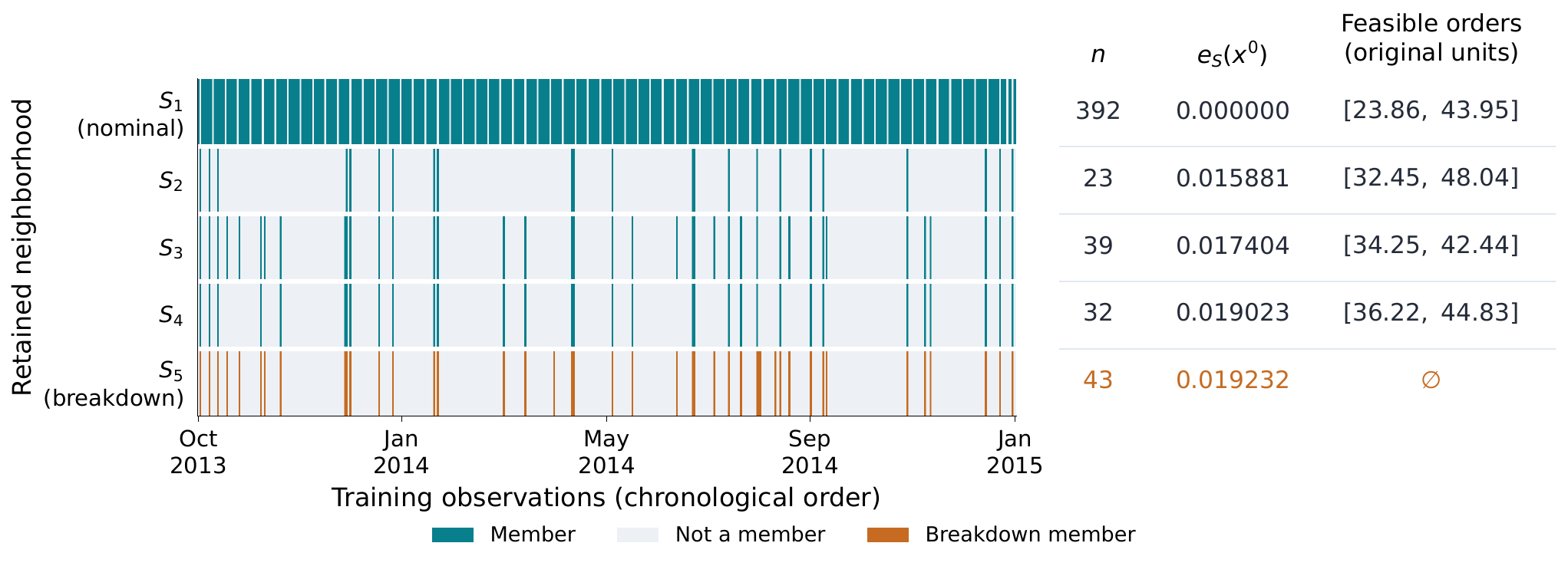}
    \caption{Membership of chronologically ordered training observations in the five retained neighborhoods. The right-hand columns report neighborhood size, entry level, and feasible-order interval.}
    \label{fig:interpret-membership}
\end{figure}
To explain why admitting $S_5$ makes the target constraint infeasible, write the empirical cost on a neighborhood $S$ as
\(
    \widehat C_S(q)
    :=\frac{1}{|S|}\sum_{i\in S}c(q,y_i).
\)
The minimum tree loss compatible with $S_5$ is approximately $0.142377$, compared with the nominal optimum $\rho^\star\approx0.123145$. Its entry level is therefore
\(
    r^\star
    =e_{S_5}(x^0)
    =\rho_{S_5}(x^0)-\rho^\star
    \approx0.019232.
\)
At the returned order, $\widehat C_{S_5}(q^\star)\approx18.5320$, which exceeds the target. Moreover, an optimal order for this neighborhood alone is $39$, with
\(
    \min_{q\in\mathcal Z}\widehat C_{S_5}(q)
    \approx18.3953
    >\tau.
\)
Hence, changing the order cannot restore feasibility once $S_5$ becomes admissible. 

As shown in Figure~\ref{fig:interpret-breakdown}, a single neighborhood is sufficient in this instance to establish breakdown for every decision. The nominal order \(q^0\) already violates the target on \(S_2\). For this query, the \(13.2\%\) increase in order quantity raises the nominal empirical cost on \(S_1\) by \(6.19\%\) and increases the breakdown threshold by \(21.10\%\), quantifying the cost and robustness trade-off.
\begin{figure}[tb]
    \centering
    \includegraphics[width=0.6\linewidth]
    {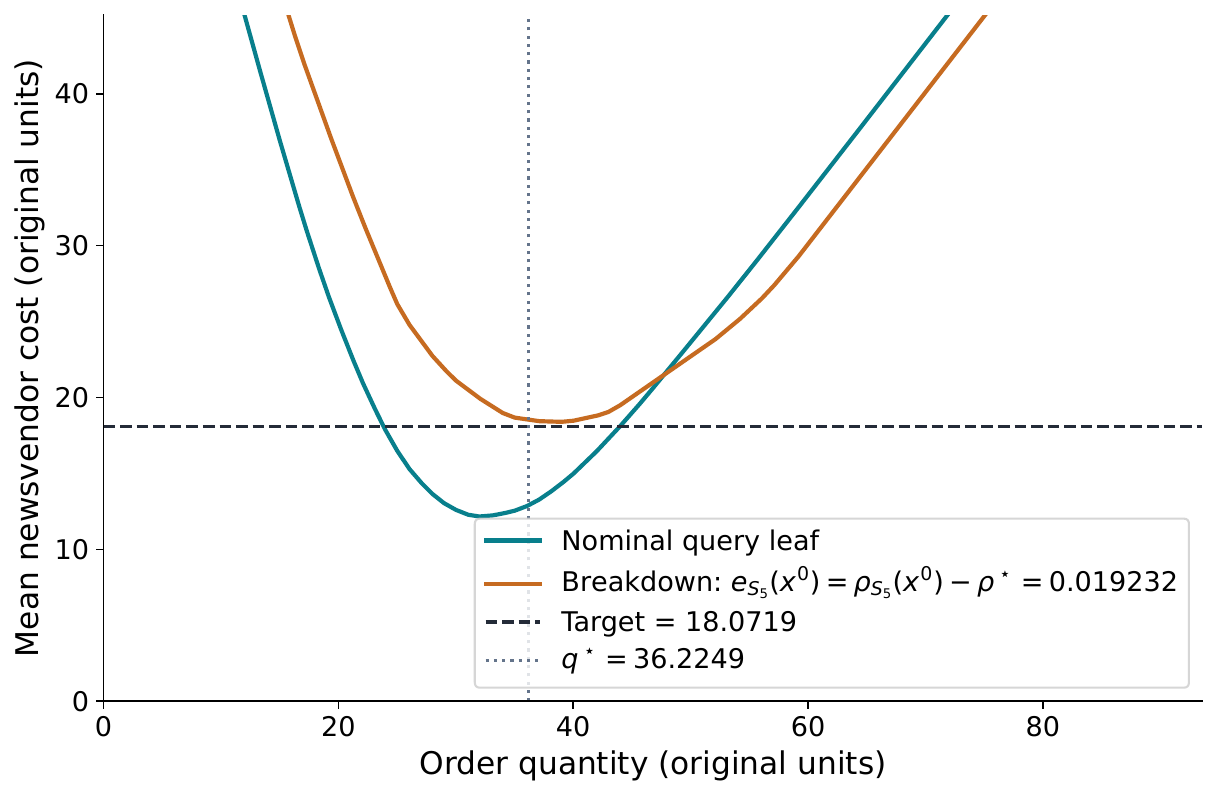}
    \caption{Empirical mean cost as a function of the order for the nominal and breakdown neighborhoods. The horizontal dashed line is the target and the vertical dotted line is $q^\star$. The minimum cost on the breakdown neighborhood exceeds the target, so no order can preserve feasibility when this neighborhood enters.}
    \label{fig:interpret-breakdown}
\end{figure}
Figure~\ref{fig:interpret-trees} shows the nominal tree and a minimum-loss tree inducing $S_5$. The highlighted query paths identify the partitions underlying the nominal and breakdown neighborhoods. The interpretation is local to the selected query and draws on several trees and neighborhoods, and it does not provide a single policy tree representing the robust decisions for all possible queries.

\begin{figure}[tb]
    \centering
    \begin{minipage}[c]{0.4\linewidth}
        \centering
        \includegraphics[width=\linewidth]
        {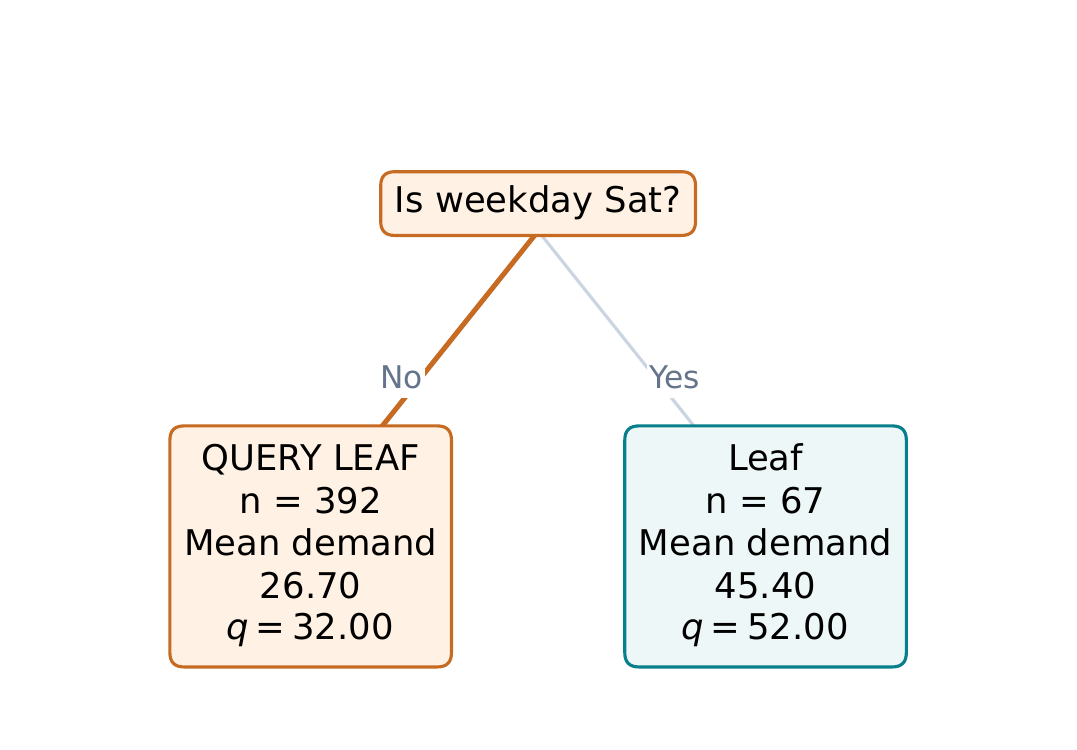}
    \end{minipage}
    \hfill
    \begin{minipage}[c]{0.4\linewidth}
        \centering
        \includegraphics[width=\linewidth]
        {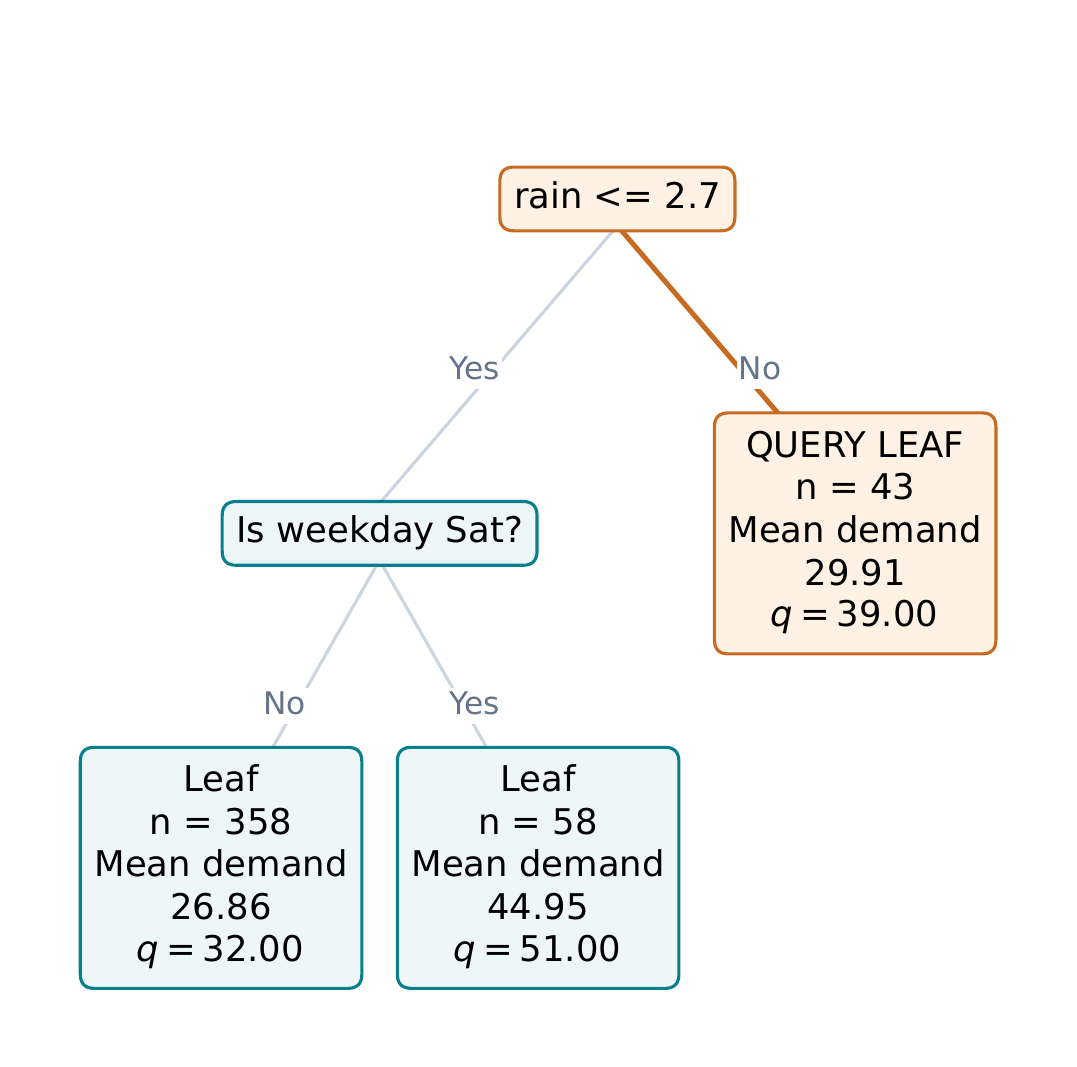}
    \end{minipage}
    \caption{Nominal tree (left) and a minimum-loss tree inducing the breakdown neighborhood $S_5$ (right). The highlighted paths route the query to its terminal leaf. Each leaf reports its sample size, mean demand, and empirical newsvendor order.}
    \label{fig:interpret-trees}
\end{figure}
We next examine how the minimum leaf size affects these results, keeping the training sample, query, target, maximum depth, and split penalty fixed. Figure~\ref{fig:nmin-sensitivity} compares the resulting breakdown thresholds with the CVaR envelope of Proposition~\ref{prop:cvar}.
Among the tested minimum leaf sizes, robustness over the full attainable tree class first occurs at $N_{\min}=161$, a fraction of $161/459\approx0.351$. At this point, however, even the minimum CVaR remains above the target. Thus, no order satisfies the sufficient feasibility condition supplied by the CVaR bound. This difference illustrates the conservatism introduced by considering arbitrary subsets of at least $N_{\min}$ observations, including subsets that cannot be induced as query neighborhoods by an admissible tree. The CVaR condition is first satisfied only at $N_{\min}=414$, a fraction of $414/459\approx0.902$. For $N_{\min}/N>1/2$, no split is feasible and the tree class already contains only the root leaf. As anticipated in Section \ref{sec:min-leaf-envelope}, the finite thresholds in Figure~\ref{fig:nmin-sensitivity} are not monotone in $N_{\min}$, because $\rho^\star$ changes with the tree class.

\begin{figure}[tbp]
    \centering
    \includegraphics[width=0.9\linewidth]
    {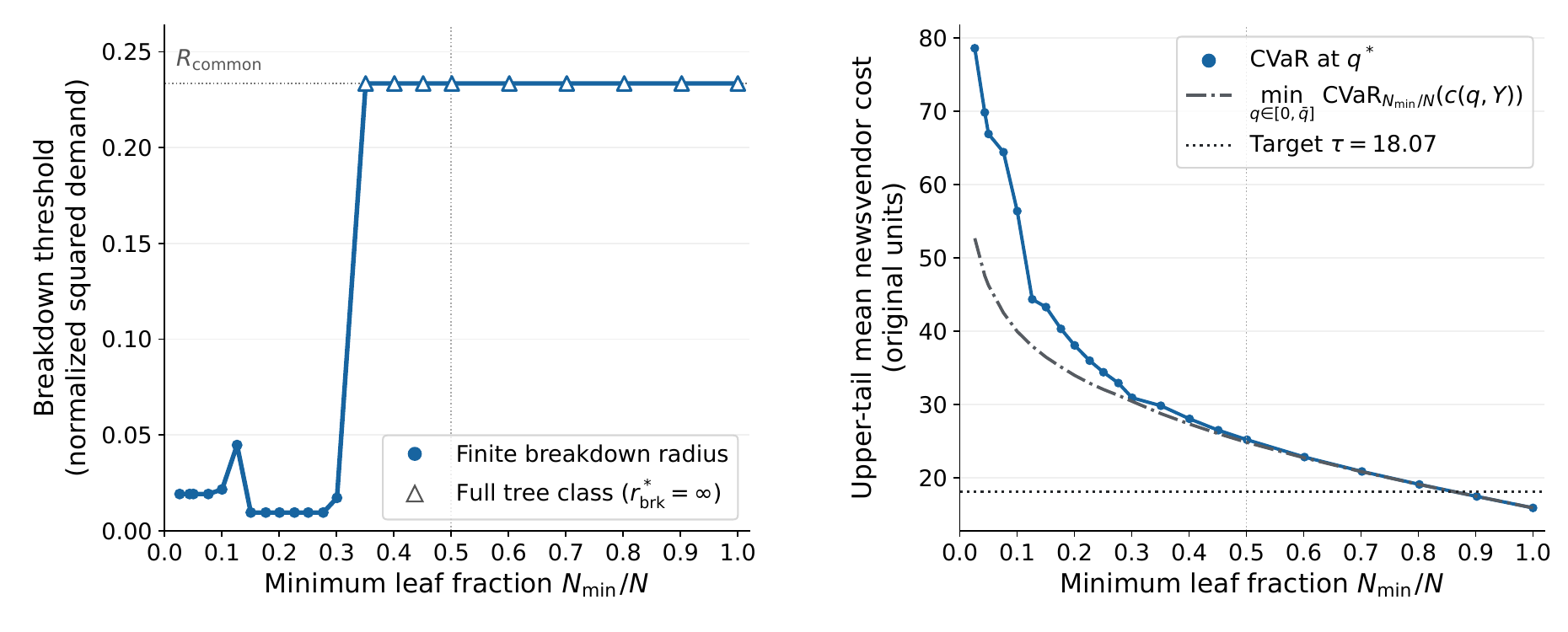}
    \caption{Sensitivity to the minimum leaf size. Left: finite
breakdown thresholds (circles) and robustness over the entire admissible
tree class (triangles). Full-class cases have infinite uncapped breakdown
thresholds and are displayed at the common ceiling
$R_{\mathrm{common}}=V_{\mathrm{norm}}+\lambda(2^D-1)\approx0.2335$,
where $V_{\mathrm{norm}}$ is the empirical variance of normalized training
demand.
Right: empirical upper-tail CVaR of the newsvendor cost at the returned
order $q^\star$, its minimum over $q\in\mathcal Z$, and the target $\tau$,
using tail fraction $N_{\min}/N$. The vertical dotted lines mark $N_{\min}/N=1/2$, above which the only feasible tree consists of a single leaf at the root.}
    \label{fig:nmin-sensitivity}
\end{figure}

\section{Conclusion and future work}\label{sec:conclusion}
This paper extends joint estimation and robustness optimization to regression trees and the local empirical conditional distributions they induce. The resulting ODT-JRRO model jointly optimizes an operational decision and the estimation-loss tolerance defining a Rashomon set, subject to prescribed targets holding under every induced distribution. The formulation links predictive multiplicity to contextual optimization by identifying the largest set of near-optimal models under which a common decision satisfies the prescribed targets.
The central structural result is that trees sharing the same query neighborhood are indistinguishable to the downstream constraints. Assigning each attainable neighborhood its minimum compatible estimation loss therefore provides a finite representation of the relevant uncertainty. We use the resulting entry levels to characterize the boundary of the feasible radii and develop a constraint generation algorithm with dynamic programming for query-path pricing. This representation avoids enumerating the individual trees and distinguishes between an attained radius cap and an excluded breakdown endpoint.
The computational experiments show that the algorithm typically requires
few neighborhoods to certify an outcome. On depth-two instances, the algorithm achieves speedups of
approximately $7 \times$ to $39 \times$ relative to full neighborhood enumeration, and our pruning mechanism substantially reduces
the number of subtree states evaluated. At larger depths, computing the
nominal optimum remains the main obstacle to scalability. 
The benchmark on real-world restaurant demand illustrates the operational implications
of optimizing robustness. Relative to ODT-SAA, ODT-JRRO increases the mean
breakdown threshold by 17.55\% and reduces empirical
$\operatorname{CVaR}_{0.1}$ by 8.27\%. The mean-cost improvement is not
statistically significant, and both methods meet the realized-cost target
on 81.05\% of test days. These results support greater robustness to alternative near-optimal trees and lower observed tail cost.
The empirical comparison assumes that realized same-day weather is observed. Evaluating orders based on weather forecasts available at the decision time remains future work.
The interpretability case study shows how individual neighborhoods explain both the
change in the order and the limit of robustness. The minimum leaf size
analysis also illustrates the conservatism of the CVaR envelope, which
includes subsets that cannot arise as admissible query neighborhoods.
The robustness certificates returned by ODT-JRRO are conditional on the observed data and the chosen tree class.
An important next step is to establish when such a certificate also implies out-of-sample target satisfaction, distinguishing the effects of sampling error from those of misspecification of the predictor class.
In problems with endogenous uncertainty, such as joint pricing and inventory optimization, the decision influences the distribution of demand~\citep{bertsimas2020predictive}. Extending the framework to this setting would require assumptions that
identify the effect of decisions on demand from historical data, together
with a neighborhood representation and separation procedure that
accommodate decision-dependent query routing.

\section*{Acknowledgements}
This work was supported by the European Research Council (ERC) under the European Union’s Horizon Europe programme (grant agreement No. 101093188, DECIDE).

\section*{CRediT authorship contribution statement}

\textbf{Lorenzo Bonasera:} Conceptualization, Methodology, Software, Validation, Investigation, Visualization, Writing- original draft, Writing-  review and editing.

\textbf{David Pisinger:} Supervision, Validation, Resources, Funding acquisition, Project administration, Writing- review and editing.

\section*{Declaration of generative AI and AI-assisted technologies in the manuscript preparation process}
During the preparation of this work the authors used Claude to assist with language editing and technical consistency checks. After using this tool, the authors reviewed and edited the content as needed and take full responsibility for the content of the published article.

%% appendix sections are then done as normal sections
%\scriptsize
\bibliographystyle{elsarticle-harv}
\bibliography{references}

@article{zhu2022joint,
  title={Joint estimation and robustness optimization},
  author={Zhu, Taozeng and Xie, Jingui and Sim, Melvyn},
  journal={Management Science},
  volume={68},
  number={3},
  pages={1659--1677},
  year={2022},
  publisher={INFORMS}
}

@inproceedings{hsu2022rashomon,
  title={{R}ashomon capacity: A metric for predictive multiplicity in classification},
  author={Hsu, Hsiang and Calmon, Flavio P.},
  booktitle={Advances in Neural Information Processing Systems},
  volume={35},
  pages={28988--29000},
  year={2022}
}

@inproceedings{marx2020predictive,
  title     = {Predictive multiplicity in classification},
  author    =  {Marx, Charles and Calmon, Flavio P. and Ustun, Berk},
  booktitle = {Proceedings of the 37th International Conference on Machine Learning},
  series    = {Proceedings of Machine Learning Research},
  volume    = {119},
  pages     = {6765--6774},
  publisher = {PMLR},
  year      = {2020}
}

@inproceedings{semenova2022existence,
  title={On the existence of simpler machine learning models},
  author={Semenova, Lesia and Rudin, Cynthia and Parr, Ronald},
  booktitle={Proceedings of the 2022 ACM Conference on Fairness, Accountability, and Transparency},
  pages={1827--1858},
  year={2022}
}

@article{fisher2019all,
  title={All models are wrong, but many are useful: Learning a variable's importance by studying an entire class of prediction models simultaneously},
  author={Fisher, Aaron and Rudin, Cynthia and Dominici, Francesca},
  journal={Journal of Machine Learning Research},
  volume={20},
  number={177},
  pages={1--81},
  year={2019}
}

@article{breiman2001statistical,
  title={Statistical modeling: The two cultures (with comments and a rejoinder by the author)},
  author={Breiman, Leo},
  journal={Statistical Science},
  volume={16},
  number={3},
  pages={199--231},
  year={2001},
  publisher={Institute of Mathematical Statistics}
}

@article{demirovic2022murtree,
  title={{MurTree}: Optimal decision trees via dynamic programming and search},
  author={Demirovi{\'c}, Emir and Lukina, Anna and Hebrard, Emmanuel and Chan, Jeffrey and Bailey, James and Leckie, Christopher and Ramamohanarao, Kotagiri and Stuckey, Peter J.},
  journal={Journal of Machine Learning Research},
  volume={23},
  number={26},
  pages={1--47},
  year={2022}
}

@inproceedings{lin2020generalized,
  title     = {Generalized and scalable optimal sparse decision trees},
  author    = {Lin, Jimmy and Zhong, Chudi and Hu, Diane and Rudin, Cynthia and Seltzer, Margo},
  booktitle = {Proceedings of the 37th International Conference on Machine Learning},
  series    = {Proceedings of Machine Learning Research},
  volume    = {119},
  pages     = {6150--6160},
  publisher = {PMLR},
  year      = {2020}
}

@inproceedings{aglin2020learning,
  title     = {Learning optimal decision trees using caching branch-and-bound search},
  author    = {Aglin, Ga{\"e}l and Nijssen, Siegfried and Schaus, Pierre},
  booktitle = {Proceedings of the AAAI Conference on Artificial Intelligence},
  volume    = {34},
  pages     = {3146--3153},
  year      = {2020}
}

@article{long2023robust,
  title={Robust satisficing},
  author={Long, Daniel Zhuoyu and Sim, Melvyn and Zhou, Minglong},
  journal={Operations Research},
  volume={71},
  number={1},
  pages={61--82},
  year={2023},
  publisher={INFORMS}
}

@article{brown2009satisficing,
  title={Satisficing measures for analysis of risky positions},
  author={Brown, David B. and Sim, Melvyn},
  journal={Management Science},
  volume={55},
  number={1},
  pages={71--84},
  year={2009},
  publisher={INFORMS}
}

@article{rockafellar2000optimization,
  title   = {Optimization of conditional value-at-risk},
  author  = {Rockafellar, R. Tyrrell and Uryasev, Stanislav},
  journal = {Journal of Risk},
  volume  = {2},
  number  = {3},
  pages   = {21--41},
  year    = {2000}
}

@inproceedings{sparseodt,
  title     = {Optimal sparse decision trees},
  author    = {Hu, Xiyang and Rudin, Cynthia and Seltzer, Margo},
  booktitle = {Advances in Neural Information Processing Systems},
  volume    = {32},
  pages     = {7267--7275},
  year      = {2019}
}

@article{aghaei2025strong,
  title={Strong optimal classification trees},
  author={Aghaei, Sina and G{\'o}mez, Andr{\'e}s and Vayanos, Phebe},
  journal={Operations Research},
  volume={73},
  number={4},
  pages={2223--2241},
  year={2025},
  publisher={INFORMS}
}

@article{categoricalodt,
  title={Optimal decision trees for categorical data via integer programming},
  author={G{\"u}nl{\"u}k, Oktay and Kalagnanam, Jayant and Li, Minhan and Menickelly, Matt and Scheinberg, Katya},
  journal={Journal of Global Optimization},
  volume={81},
  number={1},
  pages={233--260},
  year={2021},
  publisher={Springer}
}

@article{bertsimas2019optimal,
  title={Optimal prescriptive trees},
  author={Bertsimas, Dimitris and Dunn, Jack and Mundru, Nishanth},
  journal={INFORMS Journal on Optimization},
  volume={1},
  number={2},
  pages={164--183},
  year={2019},
  publisher={INFORMS}
}

@article{bertsimas2020predictive,
  title={From predictive to prescriptive analytics},
  author={Bertsimas, Dimitris and Kallus, Nathan},
  journal={Management Science},
  volume={66},
  number={3},
  pages={1025--1044},
  year={2020},
  publisher={INFORMS}
}

@inproceedings{xin2022exploring,
  title     = {Exploring the whole {R}ashomon set of sparse decision trees},
  author    = {Xin, Rui and Zhong, Chudi and Chen, Zhi and Takagi, Takuya and Seltzer, Margo and Rudin, Cynthia},
  booktitle = {Advances in Neural Information Processing Systems},
  volume    = {35},
  pages     = {14071--14084},
  year      = {2022}
}

@article{kallus2023stochastic,
  title={Stochastic optimization forests},
  author={Kallus, Nathan and Mao, Xiaojie},
  journal={Management Science},
  volume={69},
  number={4},
  pages={1975--1994},
  year={2023},
  publisher={INFORMS}
}

@article{levatic2018semi,
  title={Semi-supervised trees for multi-target regression},
  author={Levati{\'c}, Jurica and Kocev, Dragi and Ceci, Michelangelo and D{\v{z}}eroski, Sa{\v{s}}o},
  journal={Information Sciences},
  volume={450},
  pages={109--127},
  year={2018},
  publisher={Elsevier}
}

@article{boutilier2023optimal,
  title={Optimal multivariate decision trees},
  author={Boutilier, Justin J. and Michini, Carla and Zhou, Zachary},
  journal={Constraints},
  volume={28},
  number={4},
  pages={549--577},
  year={2023},
  publisher={Springer}
}

@inproceedings{boutilier2022shattering,
  title     = {Shattering inequalities for learning optimal decision trees},
  author    = {Boutilier, Justin J. and Michini, Carla and Zhou, Zachary},
  booktitle = {Integration of Constraint Programming, Artificial Intelligence, and Operations Research (CPAIOR 2022)},
  series    = {Lecture Notes in Computer Science},
  volume    = {13292},
  pages     = {74--90},
  year      = {2022},
  publisher = {Springer}
}

@article{blanquero2021optimal,
  title={Optimal randomized classification trees},
  author={Blanquero, Rafael and Carrizosa, Emilio and Molero-R{\'\i}o, Cristina and Romero Morales, Dolores},
  journal={Computers \& Operations Research},
  volume={132},
  pages={105281},
  year={2021},
  publisher={Elsevier}
}

@inproceedings{van2024piecewise,
  title     = {Piecewise constant and linear regression trees: An optimal dynamic programming approach},
  author    = {{van den Bos}, Mim and van der Linden, Jacobus G. M. and Demirovi{\'c}, Emir},
  booktitle = {Proceedings of the 41st International Conference on Machine Learning},
  volume    = {235},
  pages     = {48994--49007},
  year      = {2024}
}

@inproceedings{liu2021risk,
  title={Risk bounds and calibration for a smart predict-then-optimize method},
  author={Liu, Heyuan and Grigas, Paul},
  booktitle={Advances in Neural Information Processing Systems},
  volume={34},
  pages={22083--22094},
  year={2021}
}

@article{el2023generalization,
  title     = {Generalization bounds in the predict-then-optimize framework},
  author    = {El Balghiti, Othman and Elmachtoub, Adam N. and Grigas, Paul and Tewari, Ambuj},
  journal   = {Mathematics of Operations Research},
  volume    = {48},
  number    = {4},
  pages     = {2043--2065},
  year      = {2023},
  publisher = {INFORMS}
}

@article{mandi2024decision,
  title={Decision-focused learning: Foundations, state of the art, benchmark and future opportunities},
  author={Mandi, Jayanta and Kotary, James and Berden, Senne and Mulamba, Maxime and Bucarey, Victor and Guns, Tias and Fioretto, Ferdinando},
  journal={Journal of Artificial Intelligence Research},
  volume={80},
  pages={1623--1701},
  year={2024}
}

@article{bonasera2024optimal,
  title={Optimal shapelets tree for time series interpretable classification},
  author={Bonasera, Lorenzo and Gualandi, Stefano},
  journal={EURO Journal on Computational Optimization},
  volume={12},
  pages={100091},
  year={2024},
  publisher={Elsevier}
}

@inproceedings{zhang2023optimal,
  title     = {Optimal sparse regression trees},
  author    = {Zhang, Rui and Xin, Rui and Seltzer, Margo and Rudin, Cynthia},
  booktitle = {Proceedings of the AAAI Conference on Artificial Intelligence},
  volume    = {37},
  pages     = {11270--11279},
  year      = {2023}
}

@article{blanquero2022sparse,
  title={On sparse optimal regression trees},
  author={Blanquero, Rafael and Carrizosa, Emilio and Molero-R{\'\i}o, Cristina and Romero Morales, Dolores},
  journal={European Journal of Operational Research},
  volume={299},
  number={3},
  pages={1045--1054},
  year={2022},
  publisher={Elsevier}
}

@inproceedings{verwer2017learning,
  title     = {Learning decision trees with flexible constraints and objectives using integer optimization},
  author    = {Verwer, Sicco and Zhang, Yingqian},
  booktitle = {Integration of AI and OR Techniques in Constraint Programming (CPAIOR 2017)},
  series    = {Lecture Notes in Computer Science},
  volume    = {10335},
  pages     = {94--103},
  year      = {2017},
  publisher = {Springer}
}

@inproceedings{van2023necessary,
  title={Necessary and sufficient conditions for optimal decision trees using dynamic programming},
  author={{van der Linden}, Jacobus G. M. and de Weerdt, Mathijs and Demirovi{\'c}, Emir},
  booktitle={Advances in Neural Information Processing Systems},
  volume={36},
  pages={9173--9212},
  year={2023}
}

@article{hyafil1976constructing,
title = {Constructing optimal binary decision trees is {NP}-complete},
journal = {Information Processing Letters},
volume = {5},
number = {1},
pages = {15--17},
year = {1976},
author = {Hyafil, Laurent and Rivest, Ronald L.},
}

@article{philippi2021ddop,
  title={ddop: A {P}ython package for data-driven operations management},
  author={Philippi, Andreas and Buttler, Simone and Stein, Nikolai},
  journal={Journal of Open Source Software},
  volume={6},
  number={66},
  pages={3429},
  year={2021}
}

@book{c4.5,
author = {Quinlan, J. Ross},
title = {{C4.5}: Programs for machine learning},
year = {1993},
isbn = {1558602380},
publisher = {Morgan Kaufmann},
address = {San Mateo, CA}
}

@book{breiman1984classification,
  author    = {Breiman, Leo and Friedman, Jerome H. and
               Olshen, Richard A. and Stone, Charles J.},
  title     = {Classification and regression trees},
  publisher = {Wadsworth International Group},
  address   = {Belmont, CA},
  year      = {1984}
}

@inproceedings{black2022model,
  title     = {Model multiplicity: Opportunities, concerns, and solutions},
  author    = {Black, Emily and Raghavan, Manish and Barocas, Solon},
  booktitle = {Proceedings of the 2022 ACM Conference on Fairness, Accountability, and Transparency},
  pages     = {850--863},
  year      = {2022}
}

@article{creel2022algorithmic,
  title={The algorithmic leviathan: Arbitrariness, fairness, and opportunity in algorithmic decision-making systems},
  author={Creel, Kathleen and Hellman, Deborah},
  journal={Canadian Journal of Philosophy},
  volume={52},
  number={1},
  pages={26--43},
  year={2022},
  publisher={Cambridge University Press}
}

@article{rudin2019stop,
  title={Stop explaining black box machine learning models for high stakes decisions and use interpretable models instead},
  author={Rudin, Cynthia},
  journal={Nature Machine Intelligence},
  volume={1},
  number={5},
  pages={206--215},
  year={2019},
  publisher={Springer Nature}
}

@inproceedings{hsu2024rashomongb,
  title={{RashomonGB}: Analyzing the {Rashomon} effect and mitigating predictive multiplicity in gradient boosting},
  author={Hsu, Hsiang and Brugere, Ivan and Sharma, Shubham and L{\'e}cu{\'e}, Freddy and Chen, Chun-Fu},
  booktitle={Advances in Neural Information Processing Systems},
  volume={37},
  pages={121265--121303},
  year={2024}
}

@inproceedings{paes2023inevitability,
  title     = {On the inevitability of the {R}ashomon effect},
  author    = {Paes, Lucas Monteiro and Cruz, Rodrigo and Calmon, Flavio P. and Diaz, Mario},
  booktitle = {2023 IEEE International Symposium on Information Theory (ISIT)},
  pages     = {549--554},
  year      = {2023},
  publisher = {IEEE}
}

@article{carrizosa2021mathematical,
  title     = {Mathematical optimization in classification and regression trees},
  author    = {Carrizosa, Emilio and Molero-R{\'\i}o, Cristina and Romero Morales, Dolores},
  journal   = {TOP},
  volume    = {29},
  number    = {1},
  pages     = {5--33},
  year      = {2021},
  publisher = {Springer}
}

@article{ben1998robust,
  title={Robust convex optimization},
  author={Ben-Tal, Aharon and Nemirovski, Arkadi},
  journal={Mathematics of Operations Research},
  volume={23},
  number={4},
  pages={769--805},
  year={1998},
  publisher={INFORMS}
}

@article{soyster1973convex,
  title={Convex programming with set-inclusive constraints and applications to inexact linear programming},
  author={Soyster, Allen L.},
  journal={Operations Research},
  volume={21},
  number={5},
  pages={1154--1157},
  year={1973},
  publisher={INFORMS}
}

@article{wiesemann2014distributionally,
  title={Distributionally robust convex optimization},
  author={Wiesemann, Wolfram and Kuhn, Daniel and Sim, Melvyn},
  journal={Operations Research},
  volume={62},
  number={6},
  pages={1358--1376},
  year={2014},
  publisher={INFORMS}
}

@article{bertsimas2004price,
  title     = {The price of robustness},
  author    = {Bertsimas, Dimitris and Sim, Melvyn},
  journal   = {Operations Research},
  volume    = {52},
  number    = {1},
  pages     = {35--53},
  year      = {2004},
  publisher = {INFORMS}
}

@article{kannan2024residuals,
  title   = {Residuals-based distributionally robust optimization with covariate information},
  author  = {Kannan, Rohit and Bayraksan, G{\"u}zin and Luedtke, James R.},
  journal = {Mathematical Programming},
  volume  = {207},
  number  = {1},
  pages   = {369--425},
  year    = {2024}
}

@inproceedings{arslan2025sorted,
  title     = {{SORTeD} {R}ashomon sets of sparse decision trees: Anytime enumeration},
  author    = {Arslan, Elif and van der Linden, Jacobus G. M. and Hoogendoorn, Serge and Rinaldi, Marco and Demirovi{\'c}, Emir},
  booktitle = {Advances in Neural Information Processing Systems},
  volume    = {38},
  pages     = {44717--44755},
  year      = {2025}
}

@article{ribeiro2026integrated,
  title   = {Integrated estimate-and-optimize decision trees learning for two-stage linear decision-making problems},
  author  = {Ribeiro, Rafaela and Fanzeres, Bruno},
  journal = {European Journal of Operational Research},
  volume  = {329},
  number  = {2},
  pages   = {607--628},
  year    = {2026}
}

@article{bertsimas2022survival,
  title={Optimal survival trees},
  author={Bertsimas, Dimitris and Dunn, Jack and Gibson, Emma and Orfanoudaki, Agni},
  journal={Machine Learning},
  volume={111},
  number={8},
  pages={2951--3023},
  year={2022},
  publisher={Springer}
}

@article{amram2022policy,
  title={Optimal policy trees},
  author={Amram, Maxime and Dunn, Jack and Zhuo, Ying Daisy},
  journal={Machine Learning},
  volume={111},
  number={7},
  pages={2741--2768},
  year={2022},
  publisher={Springer}
}

@article{kuhn2025distributionally,
  title={Distributionally robust optimization},
  author={Kuhn, Daniel and Shafiee, Soroosh and Wiesemann, Wolfram},
  journal={Acta Numerica},
  volume={34},
  pages={579--804},
  year={2025},
  publisher={Cambridge University Press}
}

@article{bertsimas2011theory,
  title={Theory and applications of robust optimization},
  author={Bertsimas, Dimitris and Brown, David B. and Caramanis, Constantine},
  journal={SIAM Review},
  volume={53},
  number={3},
  pages={464--501},
  year={2011},
  publisher={SIAM}
}

@article{rahimian2022frameworks,
     author = {Rahimian, Hamed and Mehrotra, Sanjay},
     title = {Frameworks and results in distributionally robust optimization},
     journal = {Open Journal of Mathematical Optimization},
     eid = {4},
     pages = {1--85},
     year = {2022},
     volume = {3}
}

@inproceedings{chenreddy2022data,
  title={Data-driven conditional robust optimization},
  author={Chenreddy, Abhilash Reddy and Bandi, Nymisha and Delage, Erick},
  booktitle={Advances in Neural Information Processing Systems},
  volume={35},
  pages={9525--9537},
  year={2022}
}

@article{mohajerin2018data,
  title     = {Data-driven distributionally robust optimization using the {W}asserstein metric: Performance guarantees and tractable reformulations},
  author    = {Mohajerin Esfahani, Peyman and Kuhn, Daniel},
  journal   = {Mathematical Programming},
  volume    = {171},
  number    = {1},
  pages     = {115--166},
  year      = {2018},
  publisher = {Springer}
}

@article{delage2010distributionally,
  title={Distributionally robust optimization under moment uncertainty with application to data-driven problems},
  author={Delage, Erick and Ye, Yinyu},
  journal={Operations Research},
  volume={58},
  number={3},
  pages={595--612},
  year={2010},
  publisher={INFORMS}
}

@article{deng2022predictive,
  title={Predictive stochastic programming},
  author={Deng, Yunxiao and Sen, Suvrajeet},
  journal={Computational Management Science},
  volume={19},
  number={1},
  pages={65--98},
  year={2022},
  publisher={Springer}
}

@article{liu2021time,
  title={On-time last-mile delivery: Order assignment with travel-time predictors},
  author={Liu, Sheng and He, Long and Shen, Zuo-Jun Max},
  journal={Management Science},
  volume={67},
  number={7},
  pages={4095--4119},
  year={2021},
  publisher={INFORMS}
}

@article{kannan2025data,
  title = {Technical note---{D}ata-driven sample average approximation with covariate information},
  author = {Kannan, Rohit and Bayraksan, G{\"u}zin and Luedtke, James R.},
  journal={Operations Research},
  volume={73},
  number={6},
  pages={3245--3259},
  year={2025}
}

@article{ferreira2016analytics,
  title={Analytics for an online retailer: Demand forecasting and price optimization},
  author={Ferreira, Kris Johnson and Lee, Bin Hong Alex and Simchi-Levi, David},
  journal={Manufacturing \& Service Operations Management},
  volume={18},
  number={1},
  pages={69--88},
  year={2016},
  publisher={INFORMS}
}

@book{quinonero2008dataset,
  editor    = {Qui{\~n}onero-Candela, Joaquin and Sugiyama, Masashi and Schwaighofer, Anton and Lawrence, Neil D.},
  title     = {Dataset shift in machine learning},
  series    = {Neural Information Processing Series},
  publisher = {MIT Press},
  address   = {Cambridge, MA},
  year      = {2008},
  isbn      = {978-0-262-17005-5}
}

@article{ben2013robust,
  title={Robust solutions of optimization problems affected by uncertain probabilities},
  author={Ben-Tal, Aharon and den Hertog, Dick and De Waegenaere, Anja and Melenberg, Bertrand and Rennen, Gijs},
  journal={Management Science},
  volume={59},
  number={2},
  pages={341--357},
  year={2013},
  publisher={INFORMS}
}

@article{bertsimas2018data,
  title={Data-driven robust optimization},
  author={Bertsimas, Dimitris and Gupta, Vishal and Kallus, Nathan},
  journal={Mathematical Programming},
  volume={167},
  number={2},
  pages={235--292},
  year={2018},
  publisher={Springer}
}

@phdthesis{dunn2018,
  title   = {Optimal trees for prediction and prescription},
  author  = {Dunn, Jack William},
  school  = {Massachusetts Institute of Technology},
  address = {Cambridge, MA},
  year    = {2018}
}

@article{elmachtoub2022smart,
  title     = {Smart ``predict, then optimize''},
  author    = {Elmachtoub, Adam N. and Grigas, Paul},
  journal   = {Management Science},
  volume    = {68},
  number    = {1},
  pages     = {9--26},
  year      = {2022},
  publisher = {INFORMS}
}

@article{bertsimas2017optimal,
  title={Optimal classification trees},
  author={Bertsimas, Dimitris and Dunn, Jack},
  journal={Machine Learning},
  volume={106},
  number={7},
  pages={1039--1082},
  year={2017},
  publisher={Springer}
}

@inproceedings{wilder2019melding,
  title     = {Melding the data-decisions pipeline: Decision-focused learning for combinatorial optimization},
  author    = {Wilder, Bryan and Dilkina, Bistra and Tambe, Milind},
  booktitle = {Proceedings of the AAAI Conference on Artificial Intelligence},
  volume    = {33},
  pages     = {1658--1665},
  year      = {2019}
}

@inproceedings{donti2017task,
 author = {Donti, Priya and Amos, Brandon and Kolter, J. Zico},
 booktitle = {Advances in Neural Information Processing Systems},
 pages = {5484--5494},
 title = {Task-based end-to-end model learning in stochastic optimization},
 volume = {30},
 year = {2017}
}

@inproceedings{elmachtoub2020decision,
  title     = {Decision trees for decision-making under the predict-then-optimize framework},
  author    = {Elmachtoub, Adam N. and Liang, Jason Cheuk Nam and McNellis, Ryan},
  booktitle = {Proceedings of the 37th International Conference on Machine Learning},
  series    = {Proceedings of Machine Learning Research},
  volume    = {119},
  pages     = {2858--2867},
  publisher = {PMLR},
  year      = {2020}
}

@article{ban2019big,
  title={The big data newsvendor: Practical insights from machine learning},
  author={Ban, Gah-Yi and Rudin, Cynthia},
  journal={Operations Research},
  volume={67},
  number={1},
  pages={90--108},
  year={2019},
  publisher={INFORMS}
}

@article{van2021data,
  title={From data to decisions: Distributionally robust optimization is optimal},
  author={Van Parys, Bart P. G. and Mohajerin Esfahani, Peyman and Kuhn, Daniel},
  journal={Management Science},
  volume={67},
  number={6},
  pages={3387--3402},
  year={2021},
  publisher={INFORMS}
}

@article{goerigk2023data,
  title={Data-driven robust optimization using deep neural networks},
  author={Goerigk, Marc and Kurtz, Jannis},
  journal={Computers \& Operations Research},
  volume={151},
  pages={106087},
  year={2023},
  publisher={Elsevier}
}

@article{sadana2025survey,
  title={A survey of contextual optimization methods for decision-making under uncertainty},
  author={Sadana, Utsav and Chenreddy, Abhilash and Delage, Erick and Forel, Alexandre and Frejinger, Emma and Vidal, Thibaut},
  journal={European Journal of Operational Research},
  volume={320},
  number={2},
  pages={271--289},
  year={2025},
  publisher={Elsevier}
}
%% If you have bib database file and want bibtex to generate the
%% bibitems, please use
%%
%%  \bibliographystyle{elsarticle-num} 
%%  \bibliography{<your bibdatabase>}

%% else use the following coding to input the bibitems directly in the
%% TeX file.

%% Refer following link for more details about bibliography and citations.
%% https://en.wikibooks.org/wiki/LaTeX/Bibliography_Management
\end{document}